\newtheorem{theorem}{Theorem}
\theoremstyle{plain}
\newtheorem{definition}[theorem]{Definition}
\newtheorem{corollary}[theorem]{Corollary}
\newtheorem{lemma}[theorem]{Lemma}
\newtheorem{remark}[theorem]{Remark}
\numberwithin{equation}{section}
\newcommand{\R}{\mathbb{R}}
\newcommand{\C}{\mathbb{C}}
\newcommand{\mS}{\mathbb{S}}
\newcommand{\N}{\mathbb{N}}
\newcommand{\Hcurl}{H_{\rm curl}}
\renewcommand{\phi}{\varphi}
\newcommand{\bpm}{{\begin{pmatrix}}}
\renewcommand{\phi}{\varphi}
\DeclareMathOperator{\const}{const.}
\DeclareMathOperator{\range}{Rg}
\DeclareMathOperator{\trace}{trace}
\newcommand{\dist}{\text{\rm dist}}
\newcommand{\supp}{\text{\rm supp}}
\newcommand{\Rg}{\text{\rm Rg}}
\def\blem{\begin{lemma}}\def\elem{\end{lemma}}
\def\bthm{\begin{theorem}}\def\ethm{\end{theorem}}
\def\bcor{\begin{corollary}}\def\ecor{\end{corollary}}
\def\beq{\begin{equation}}\def\eeq{\end{equation}}
\begin{document}

\title[Breathers and rogue waves for semilinear curl-curl wave equations]{Breathers and rogue waves for semilinear curl-curl wave equations}

\author{Michael Plum}
\address{M. Plum \hfill\break 
Institute for Analysis, Karlsruhe Institute of Technology (KIT), \hfill\break
D-76128 Karlsruhe, Germany}
\email{michael.plum@kit.edu}

\author{Wolfgang Reichel}
\address{W. Reichel \hfill\break 
Institute for Analysis, Karlsruhe Institute of Technology (KIT), \hfill\break
D-76128 Karlsruhe, Germany}
\email{wolfgang.reichel@kit.edu}

\date{\today}

\subjclass[2000]{Primary: 35L71; Secondary: 34C25}
\keywords{semilinear wave-equation, breather, rogue wave, phase plane method}

\begin{abstract} We consider localized solutions of variants of the semilinear curl-curl wave equation 
$s(x) \partial_t^2 U  +\nabla\times\nabla\times U + q(x) U \pm V(x) |U|^{p-1} U = 0$ for $(x,t)\in \R^3\times\R$ and arbitrary $p>1$. Depending on the coefficients $s, q, V$ we can prove the existence of three types of localized solutions: time-periodic solutions decaying to $0$ at spatial infinity, time-periodic solutions tending to a nontrivial profile at spatial infinity (both types are called breathers), and rogue waves which converge to $0$ both at spatial and temporal infinity. Our solutions are weak solutions and take the form of gradient fields. Thus they belong to the kernel of the curl-operator so that due to the structural assumptions on the coefficients the semilinear wave equation is reduced to an ODE. Since the space dependence in the ODE is just a parametric dependence we can analyze the ODE by phase plane techniques and thus establish the existence of the localized waves described above. Noteworthy side effects of our analysis are the existence of compact support breathers and the fact that one localized wave solution $U(x,t)$ already generates a full continuum of phase-shifted solutions $U(x,t+b(x))$ where the continuous function $b:\R^3\to\R$ belongs to a suitable admissible family.
\end{abstract}
\maketitle


\section{Introduction}
Localized solutions of nonlinear wave equations on $\R^d\times\R$ have been a research topic for many decades both in the physics and the mathematics community. One type of such solutions are breathers which are time-periodic solutions with localization in space. Another type of solutions are rogue waves which are localized both in space and time. Both types have been known for the nonlinear Schr\"odinger equation (NLS) for a long time, i.e., bright breathers in the focusing case \cite{zakharov_shabat_71}, dark/black breathers in the defocusing case \cite{zakharov_shabat_73}, and the Peregrine solution \cite{peregrine} as an example for a rogue wave also in the focusing case. For other completely integrable equations like the Korteweg-de-Vries equation and the sine-Gordon equation bright breather solutions are known, cf. \cite{ab_kaup_newell_segur:73,kruskal_kdv}. Whereas (at least in the scalar case) both bright and dark/black breather solutions converge at spatial infinity to a (possibly zero) limit, the terminology separates them with regards to the profile of their absolute value of their intensity: for bright breathers the absolute value stands above the limit at infinity, whereas for the dark breathers the absolute value of the intensity appears as a relative dip below the background limit at infinity. 

\medskip

For the above mentioned completely integrable systems the inverse scattering transform developed in \cite{kruskal_kdv} is the key to finding explicit localized solutions. For wave equations where the nonlinearity is $\mS^1$-equivariant it is much simpler to find  complex valued time-harmonic breathers of the type $u(x,t) = e^{i\omega t} \pmb{u}(x)$. They are much easier to obtain by variational methods and have been studied extensively for the nonlinear Schr\"odinger equation, cf. \cite{berestycki_lions, strauss:1977}, and more recently for spatially periodic potentials in e.g. \cite{alama_li:1992,pankov:2005}. In our case it is also very simple to find monochromatic, time-harmonic, complex valued breathers, cf. Remark~\ref{remark_main_breather}(c) and Remark~\ref{remark_main_breather_dark} below, and it is remarkable that this is true under (almost) exactly the same assumptions as for the existence of polychromatic, real valued breathers in Theorem~\ref{main_breather} and in Theorem~\ref{main_breather_dark}. 

\medskip

As soon as one steps away from completely integrable systems or from time-harmonic breathers, the number of examples of known localized solutions of nonlinear wave equations becomes quite small. In discrete nonlinear lattice equations breathers are still more common, cf. \cite{mackay_aubry,james_breathers:09}. For semilinear scalar $1+1$-dimensional wave equations with non-constant coefficients it is a challenging task to find bright breathers. This was accomplished for the first time in \cite{blank_bruckner_lescarret_schneider:11} by making use of spatial dynamics and center manifold reduction, and subsequently by variational methods in \cite{hirsch_reichel_poly, mandel_scheider_breather, scheider_breather, maier_reichel_schneider}. While the physics literature on rogue waves is quite abundant, cf. \cite{efim_pelinovsky} for a text book on oceanic rogues waves, mathematically rigorous proofs for the existence of rogue waves are much more rare. Beyond Peregrine's first example \cite{peregrine} they have been found for integrable systems like derivative NLS, focusing NLS and modified Korteweg-de-Vries in \cite{chen_pel2,chen_pel1, chen_pel_white1}, and recently also for nonintegrable variants of the NLS as perturbations of the Peregrine solution in \cite{Kevrekidis3, Kevrekidis2, Kevrekidis1}.
 
\medskip

In this paper we consider for $p>1$ two $3+1$-dimensional semilinear curl-curl wave equations of the form 
\begin{equation}
\usetagform{pm}
\label{breather}
s(x) \partial_t^2 U  +\nabla\times\nabla\times U + q(x) U \pm V(x) |U|^{p-1} U = 0 \mbox{ for } (x,t)\in \R^3\times\R
\end{equation}
and 
\begin{equation}
\label{rogue}
s(x) \partial_t^2 U  +\nabla\times\nabla\times U - q(x) U + V(x) |U|^{p-1} U = 0 \mbox{ for } (x,t)\in \R^3\times\R.
\end{equation}
Here we assume that $s,q,V:\R^3\to (0,\infty)$ are positive functions. Note that \eqref{breather}$_\pm$ and \eqref{rogue} differ in the sign in front of the linear term $q(x) U$ and that in \eqref{breather}$_\pm$ the sign in front of the nonlinearity may be positive or negative whereas in \eqref{rogue} only positive coefficients in front of the nonlinearity are admissible. All our results apply verbatim when the curl-curl operator in \eqref{breather}, \eqref{rogue} is replaced by a differential expression $P(x,\nabla)\nabla\times$ where $P(x,y)=(P_{ij}(x,y))_{i,j=1,2,3}$ is a $3\times 3$-matrix with $P_{ij}(x,y))$, $i,j=1,\ldots,3$ being polynomials in $y=(y_1,y_2,y_3)$ with $x$-dependent coefficients. Examples are $P(x)=\nabla\times A(x) \nabla\times$, $P(x)=A(x)\nabla\times\nabla\times$ with a $3\times 3$-matrix $A(x)$.

\medskip

Both \eqref{breather}$_\pm$ and \eqref{rogue} have certain similarities with the second-order form of the cubic nonlinear Maxwell problem, and this motivates us to study these equations. In the absence of charges and currents, and for a simplified Kerr-type material law $D=\epsilon_0(1+\chi_1(x)+\chi_3(x) |E|^2)E$, a second-order wave-type equation derived form the nonlinear Maxwell problem for the electric field $E=E(x,t)$ is given by $\nabla\times\nabla\times E+\epsilon_0\mu_0(1+\chi_1(x))\partial_t^2 E+\epsilon_0\mu_0\chi_3(x) (|E|^2 E)_{tt}=0$ in $\R^3\times\R$. Here $\epsilon_0>0$ is the vacuum electric permittivity, $\mu_0>0$ the vacuum magnetic permeability, and $\chi_1, \chi_3:\R\to\R$ denote the first and third order electric response tensors of the material (second order responses do not appear in centrosymmetric materials). The nonlinear Maxwell problem differs from our semilinear equations by the second time derivative of the nonlinear term. However, since the particularity of the appearance of the curl-curl operator instead of the Laplacian is the same for both problems, we are motivated to investigate the effects of the curl-curl operator. In fact, most of our results are only true because the appearance of the curl operator.

\medskip

For \eqref{breather}$_\pm$, where both the $+$ and the $-$ sign in front of the nonlinear term $V(x)|U|^{p-1}U$ is admissible, as well as for \eqref{rogue} we will establish the existence of real-valued breathers (time-periodic spatially localized solutions). The difference between the breathers for the equations \eqref{breather}$_\pm$ and \eqref{rogue} is this: for \eqref{breather}$_\pm$ we will show existence of breathers which decay to $0$ at spatial infinity whereas for \eqref{rogue} we will find breathers which have a nonzero profile at spatial infinity. One might be inclined to call the first type of breathers (decaying to $0$) ``bright breathers'' and the second type (having a nonzero profile at infinity) ``dark breathers'' but we will not do this, since in general we have no evidence that the absolute value of the intensity of the second type has a dip which is below the absolute value of the intensity at spatial infinity.  In contrast to the existence results for breathers, the existence of rogue waves (solutions which are localized in space and time) will only be done for \eqref{rogue}. 

Our results depend strongly on the assumption that (at least some of) the coefficients $s(x), q(x), V(x)$ in \eqref{breather}$_\pm$ and \eqref{rogue} depend on $x$. In this sense we are close to the results in \cite{blank_bruckner_lescarret_schneider:11} and \cite{hirsch_reichel_poly,maier_reichel_schneider} since these papers also make strong use of spatially varying coefficients. However, even more important in our setting is the particular property of the curl-operator to annihilate gradient fields. This enables us to construct gradient field breathers by ODE-techniques which lie in the kernel of the curl-operator.

\medskip

Results for real valued localized solutions for \eqref{breather}$_\pm$ or \eqref{rogue} are rare. To the best of our knowledge we are only aware our our previous paper \cite{plum_reichel}. If instead of real valued breathers one considers complex valued breathers of the time harmonic type $U(x,t) = e^{i\omega t} \pmb{U}(x)$ then the resulting elliptic equation for the profile $\pmb{U}$ is the same whether one starts from \eqref{breather}$_\pm$, \eqref{rogue} or from the above mentioned nonlinear Maxwell problem. For the latter there are numerous results on the existence of time harmonic solutions relying on refined methods for vector valued elliptic variational problems, cf. \cite{ABDF06,benci_fortunato_archive,daprile_siciliano,mederski_schino,mederski_schino_szulkin,mederski_overview, mederski_arma,mederski_reichel}, or fixed point methods, cf. \cite{mandel_curl_curl}. 

\medskip

In our previous paper \cite{plum_reichel} we have considered the situation where $s, q, V$ are radially symmetric coefficients. Accordingly, we constructed classical solutions to \eqref{breather}$_\pm$ of the form
$$
U(x,t) = \psi(|x|,t)\frac{x}{|x|}.
$$
which has the property that $\nabla\times\nabla\times U=0$ since $U$ is a gradient field. In the current paper we generalize this approach by making generalized symmetry assumptions given below on the coefficients $s, q, V$ and using the ansatz 
$$
U(x,t) = \psi(g(x),t)\frac{\nabla g(x)}{|\nabla g(x)|}
$$
with the additional assumption that $|\nabla g(x)|=G(g(x))$ and with suitable regularity conditions on $g:\R^3\to \R$ and $G:\R\to (0,\infty)$. Again $U$ is a gradient field and thus $\nabla\times\nabla\times U=0$. Obviously the choice $g(x)=|x|$ reduces to the previously considered situation. In order to stay methodically close to the previous situation we require a kind of compatibility condition between the function $g$ and the coefficients $s, q, V$. We express the precise compatibility conditions in the following way denoting $\Rg(g)=g(\R^3)$:
\begin{itemize}
\item[(C1)] $g\in W^{1,1}_{loc}(\R^3)$ and is continuous with $\nabla g \not = 0$ a.e. on $\R^3$.
\item[(C2)] $|\nabla g(x)| = G(g(x))$ with $G:\Rg(g)\to (0,\infty)$ continuous, $\inf_{\Rg(g)}G>0$.
\item[(C3)] For $s,q, V:\R^3\to (0,\infty)$ assume $s(x)=\tilde s(g(x))$, $q(x)=\tilde q(g(x))$, $V(x)=\tilde V(g(x))$ with $\tilde s, \tilde q, \tilde V:\Rg(g)\to (0,\infty)$ continuous.
\end{itemize}
We use the abbreviation 
$$
\sigma(x) = \sqrt{\frac{q(x)}{s(x)}} \quad\mbox{ and }\quad \tau(x) = \Bigl(\frac{q(x)}{V(x)}\Bigr)^{\frac{1}{p-1}}
$$
and 
$$
\tilde\sigma(\zeta) = \sqrt{\frac{\tilde q(\zeta)}{\tilde s(\zeta)}} \quad\mbox{ and }\quad \tilde\tau(\zeta) = \Bigl(\frac{\tilde q(\zeta)}{\tilde V(\zeta)}\Bigr)^{\frac{1}{p-1}}
$$
where the functions $\sigma, \tau: \R^3\to (0,\infty)$ and $\tilde\sigma, \tilde \tau:\Rg(g)\to (0,\infty)$ are continuous.

\medskip

Using arbitrary constants $\gamma>0, r_0\geq 0$ and the notation $r=\sqrt{x_1^2+x_2^2}$ we present in Table~\ref{list} examples of functions $g$, $G$ satisfying (C1)-(C2). We also indicate the applicability of Theorem~\ref{main_breather}, Theorem~\ref{main_breather_dark}, and Theorem~\ref{main_rogue} for reasons that will be explained in the remarks following the statement of the theorems. 

\begin{table}[ht!] \label{list}
\begin{tabular}{|c|c|c|c|c|}
\hline
$g(x)$ & $G(x)$ & Theorem~\ref{main_breather} applies & Theorem~\ref{main_breather_dark} applies & Theorem~\ref{main_rogue} applies \\ \hline 
$\gamma |r-r_0|+x_3$ & $\sqrt{1+\gamma^2}$ & no & yes & no \\
$\gamma |r-r_0|+|x_3|$ & $\sqrt{1+\gamma^2}$ & yes & yes & yes \\
$\sqrt{(r-r_0)^2+ x_3^2}$ & $1$ & yes & yes & yes\\ \hline  
\end{tabular}
\caption{Examples of functions $g, G$ fulfilling (C1)-(C2). Applicability of the main theorems is indicated. Notation: $\gamma>0$, $r_0\geq 0$ are arbitrary constants, $r=\sqrt{x_1^2+x_2^2}$.}
\end{table}

In contrast to our previous paper \cite{plum_reichel}, where we considered only classical solutions, the solutions we construct in the present paper are suitably defined weak solutions in a sense made precise below.

\medskip

In Section~\ref{sec:breather} we look for real-valued weak solutions $U:\R^3\times \R\to \R^3$ which are $T$-periodic in time and spatially exponentially localized. The notion of exponential localization in space and in space-time (needed later in Section~\ref{sec:rogue}) is described next.

\begin{definition}[exponential localization] Let $U_\infty:\R^3\times\R\to\R$ be a vector field. A function $U:\R^3\times\R\to\R^3$ is called 
\begin{itemize}
\item[(i)] spatially exponentially localized w.r.t. $U_\infty$ if there exists $\delta>0$ such that
$$\sup\{\bigl|U(x,t)-U_\infty(x,t)\bigr| e^{\delta |x|}: (x,t)\in \R^3\times\R\}< \infty,
$$ 
\item[(ii)] space-time exponentially localized w.r.t. $U_\infty$ if there exists $\delta>0$ such that
$$
\sup\{\bigl|U(x,t)-U_\infty(x,t)\bigr|e^{\delta(|t|+|x|)}: (x,t)\in \R^3\times\R\}<\infty.
$$ 
\end{itemize}
\end{definition}

\begin{remark} Both in Theorem~\ref{main_breather} and Theorem~\ref{main_rogue} the vector field $U_\infty$ will be zero. In Theorem~\ref{main_breather_dark}, where we look for time periodic solutions $U$, the vector field $U_\infty$ will also have to be time periodic. In fact, it will be a nontrivial gradient field which solves \eqref{rogue} at spatial infinity.
\end{remark}

Weak solutions are defined as follows. 

\begin{definition}[weak solution] For $-\infty\leq T_1<T_2\leq +\infty$ a function $U: (T_1,T_2)\to L^\infty_{loc}(\R^3)^3$, which satisfies $U\in L^2((T_1,T_2); \Hcurl(B))\cap H^2((T_1,T_2);L^2(B)^3)$ for every open ball $B\subset\R^3$, is called a weak solution of \eqref{breather}$_\pm$ on $(T_1,T_2)$ if and only if for a.a. $t\in (T_1,T_2)$
$$
\int_{\R^3} \Bigl(s(x)\partial_t^2 U(x,t)\cdot v+ \nabla \times U(x,t)\cdot \nabla\times v+ \bigl(q(x) U(x,t) \pm  V(x) |U(x,t)|^{p-1}U(x,t)\bigr)\cdot v\Bigr)\,dx=0
$$
for all $v\in \Hcurl(\R^3)$ with compact support. Weak solutions of \eqref{rogue} are defined analogously.  
\label{def_breather}
\end{definition}

\begin{remark} For $-\infty<T_1<T_2<+\infty$ a weak solution $U$ is $T=T_2-T_1$-periodic if $U(\cdot,T_1)=U(\cdot,T_2)$, $\partial_t U(\cdot,T_1)=\partial_t U(\cdot,T_2)$, where the equalities are understood in the sense of the $L^2(B)$-equalities $\trace U|_{t=T_1} = \trace U|_{t=T_2}$ and $\trace \partial_t U|_{t=T_1}=\trace \partial_t U|_{t=T_2}$ for every open ball $B\subset\R^3$ since $U,\partial_t U\in H^1((T_1,T_2); L^2(B)^3)$. 
\end{remark}

For the case of \eqref{breather}$_\pm$ we consider both the case of the coefficient $+V(x)$ and $-V(x)$ in front of the nonlinearity. In both cases we have existence results which differ in only one hypothesis. Our main result for breathers reads as follows. 

\begin{theorem} Let $s,q,V:\R^3\to (0,\infty)$ and $g:\R^3\to\R$ satisfy {\rm (C1)--(C3)} and suppose additionally that 
\begin{itemize}
\item[(B1)] $\sigma_\infty:= \lim_{|x|\to\infty}\sigma(x)$ exists and $\sup_{\R^3} |\sigma(x)-\sigma_\infty|e^{\delta|x|}<\infty$ for some $\delta>0$,
\item[(B2)] $\sigma\leq\sigma_\infty$, $\sigma\not\equiv \sigma_\infty$ on $\R^3$,
\item[(B3)] $\sup_{\R^3} \tau<\infty$.
\end{itemize}
Then with $\omega:=\sigma_\infty$ there exists a $T=\frac{2\pi}{\omega}$-periodic $\R^3$-valued weak solution $U\not\equiv 0$ of \eqref{breather}$_+$ (called breather) which is spatially exponentially localized w.r.t. $U_\infty=0$. The breather generates a continuum of phase-shifted breathers $U_a(x,t)=U(x,t+a(g(x)))$ where $a:\R\to \R$ is an arbitrary continuous function.
The same statements hold for \eqref{breather}$_-$ if {\rm (B2)} is replaced by 
\begin{itemize}
\item[(B2)'] $\sigma\geq\sigma_\infty$, $\sigma\not\equiv \sigma_\infty$ on $\R^3$.
\end{itemize}
\label{main_breather}
\end{theorem}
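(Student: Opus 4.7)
The plan is to realize the breather as a gradient field via the ansatz
\[
U(x,t)=\psi(g(x),t)\,\frac{\nabla g(x)}{G(g(x))},
\]
which, by (C2), equals $\nabla_{x}\!\int_{0}^{g(x)}\psi(\eta,t)/G(\eta)\,d\eta$. Consequently $\nabla\times U\equiv 0$, so the curl-curl term in Definition~\ref{def_breather} drops out, and using (C3) together with $q/s=\sigma^{2}$ and $V/s=\sigma^{2}\tau^{1-p}$, the weak formulation reduces to the parametric family of Hamiltonian ODEs
\[
\ddot\psi(\zeta,t)+\tilde\sigma(\zeta)^{2}\psi(\zeta,t)\pm\tilde\sigma(\zeta)^{2}\tilde\tau(\zeta)^{1-p}|\psi(\zeta,t)|^{p-1}\psi(\zeta,t)=0,\qquad\zeta=g(x)\in\Rg(g),
\]
in which the two signs correspond to \eqref{breather}$_\pm$ and $\zeta$ acts as a continuous parameter.

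The heart of the argument is a phase-plane analysis of this ODE at each fixed $\zeta$. The substitution $\psi=Au$ turns the period of the closed orbit of amplitude $A$ into
\[
T_\pm(A,\zeta)=\frac{4}{\tilde\sigma(\zeta)}\int_{0}^{1}\frac{du}{\sqrt{(1-u^{2})\pm\tfrac{2\tilde\tau(\zeta)^{1-p}A^{p-1}}{p+1}(1-u^{p+1})}}.
\]
In the $+$ case $T_{+}(\cdot,\zeta)$ is strictly decreasing from $2\pi/\tilde\sigma(\zeta)$ to $0$, so (B2) yields a unique $A(\zeta)\ge 0$ with $T_{+}(A(\zeta),\zeta)=2\pi/\sigma_\infty$, vanishing exactly when $\tilde\sigma(\zeta)=\sigma_\infty$. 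In the $-$ case orbits are confined to $|\psi|<\tilde\tau(\zeta)$ inside the separatrix and $T_{-}(\cdot,\zeta)$ is strictly increasing from $2\pi/\tilde\sigma(\zeta)$ to $+\infty$, and (B2)' gives the analogous $A(\zeta)\in[0,\tilde\tau(\zeta))$. Smoothness of the integrand together with the implicit function theorem delivers continuity of $\zeta\mapsto A(\zeta)$; (B3) is what keeps the estimates uniform in $\zeta$.

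Spatial exponential localization is then extracted from the small-amplitude expansion
\[
T_\pm(A,\zeta)=\frac{2\pi}{\tilde\sigma(\zeta)}\Bigl(1\mp c_p\,\tilde\tau(\zeta)^{1-p}A^{p-1}+O(A^{2(p-1)})\Bigr),\qquad c_p>0,
\]
which, solved for $A$ in $T_\pm=2\pi/\sigma_\infty$, gives $A(\zeta)^{p-1}\asymp|\sigma_\infty-\tilde\sigma(\zeta)|$. Combined with (B1), $|\sigma(x)-\sigma_\infty|\le Ce^{-\delta|x|}$, this produces $|U(x,t)|\le A(g(x))\le C'e^{-\delta|x|/(p-1)}$. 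Defining $\psi(\zeta,t)$ as the orbit through $(A(\zeta),0)$ makes $U$ automatically $T=2\pi/\sigma_\infty$-periodic; the regularity required in Definition~\ref{def_breather} is immediate because $\nabla\times U=0$ handles the $\Hcurl$ part and $\partial_{t}^{2}\psi$ is continuous in $(\zeta,t)$ via the ODE. Finally, for any continuous $a:\R\to\R$, autonomy of the ODE in $t$ makes $t\mapsto\psi(\zeta,t+a(\zeta))$ another $T$-periodic solution at each $\zeta$, so $U_a(x,t)=U(x,t+a(g(x)))$ inherits all properties and produces the claimed continuum of phase-shifted breathers.

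I expect the main obstacle to be making the small-$A$ expansion \emph{uniform} in $\zeta$: one must control the remainder $O(A^{2(p-1)})$ and the coefficient $c_p$ independently of $\zeta\in\Rg(g)$ — which is precisely where (B3), the boundedness of $\tilde\tau$, and the continuity requirements in (C2)--(C3) enter decisively — so that the exponential decay of $\sigma-\sigma_\infty$ is transferred faithfully to $A(\zeta)$ at the sharp rate $\delta/(p-1)$.
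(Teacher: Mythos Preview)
Your proposal is correct and follows the same overall strategy as the paper: gradient-field ansatz $U=\psi(g(x),t)\nabla g/|\nabla g|$, reduction to a parametric ODE in $t$, and selection of the orbit at each $\zeta$ by matching its minimal period to $T=2\pi/\sigma_\infty$. The one substantive difference in execution is that the paper first rescales $\psi(\zeta,t)=\tilde\tau(\zeta)\,y(\tilde\sigma(\zeta)t)$, which collapses your $\zeta$-dependent ODE to the single normalized equation $\ddot y+y\pm|y|^{p-1}y=0$; orbits are then parametrized by the value $c$ of the first integral rather than by amplitude, and the period map $L_\pm(c)$ together with its inverse $M_\pm$ and the expansion $\sqrt{M_\pm(s)}=\sqrt{\alpha}\,|2\pi-s|^{1/(p-1)}(1+O(|2\pi-s|))$ are quoted from a $\zeta$-independent lemma. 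This rescaling is exactly what removes the obstacle you anticipate: because the $\zeta$-dependence in your period integral enters only through the combination $\tilde\tau(\zeta)^{1-p}A^{p-1}$ and the overall factor $1/\tilde\sigma(\zeta)$, the small-$A$ expansion is automatically uniform once one passes to $y$; (B3) then appears only as the multiplicative bound $|\psi|\le(\sup\tau)\sqrt{M_+(\sigma(x)T)}$. A minor point: since $g$ is merely $W^{1,1}_{loc}$, the identity $\nabla\times U=0$ should be established distributionally via the chain rule for Sobolev functions rather than classically, as the paper does.
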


\begin{remark} \label{remark_main_breather}
(a) Let us denote by $D$ the set of all finite accumulation points of $(g(x^{(n)}))_{n\in\N}$ when $(x^{(n)})_{n\in\N}$ is an arbitrary sequence in $\R^3$ with $|x^{(n)}|\to\infty$ as $n\to \infty$. Then assumption (B1) implies that $\tilde\sigma\equiv \omega=\sigma_\infty$ on $D$. For the first example $g(x)=\gamma |r-r_0|+x_3$ in Table~\ref{list} we find $D=\R$ so that $\tilde\sigma\equiv \omega=\sigma_\infty$ and thus (as will become clear from the proof) $U\equiv 0$ is the only outcome of the construction of Theorem~\ref{main_breather}. For the other two examples in Table~\ref{list} one finds $D=\emptyset$. \\
(b) An interesting aspect of Theorem~\ref{main_breather} may be that one can construct breather solutions with compact spatial support when the function $g$ has the property that $g(x)\to\infty$ as $|x|\to\infty$. We will prove this statement after the end of the proof of Theorem~\ref{main_breather} in Remark~\ref{details}. If in addition to the assumptions of Theorem~\ref{main_breather} we suppose that $\tilde\sigma$ is identically equal to $\omega=\frac{2\pi}{T}$ outside the ball $B_R(0)\subset \R^3$, then we will see that the entire family $U_a$ of breather solutions has compact spatial support within $\overline{B}_\rho(0)$ provided $\rho$ is so large that $g(x)\geq R$ for $|x|\geq \rho$.\\
(c) Under the same assumptions as in Theorem~\ref{main_breather} one can directly construct monochromatic complex-valued exponentially localized breather solutions $U:\R^3\times \R\to \C^3$ of the form $U(x,t)=\phi(g(x))e^{\mathrm{i}\omega t}\frac{\nabla g(x)}{|\nabla g(x)|}$ with a suitable real-valued profile $\phi:\Rg(g)\to \R$. The proof is by an explicit construction which is virtually the same as in \cite[Theorem 3]{plum_reichel}.  
\label{remarks_breather}
\end{remark}

\medskip

For \eqref{rogue} we can show the existence of another type of exponentially localized breather having a possibly non-zero norm-limit at infinity. For such solutions we are not aware of any other existence result for \eqref{rogue}. We begin with the constant coefficient case.

\begin{theorem} Let $s\equiv 1$, $q\equiv 1$, and $V\equiv 1$. For $T\geq \frac{2\pi}{\sqrt{p-1}}$ there is a $T$-periodic weak solution $U^\ast(x,t) = y(t) \frac{\nabla g(x)}{|\nabla g(x)|}$ of \eqref{rogue} where $y$ is a positive $T$-periodic solution of $\ddot y -y+|y|^{p-1}y=0$.
\label{dark_constant_coefficients}
\end{theorem}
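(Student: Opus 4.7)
The plan is to plug the ansatz into the weak formulation of \eqref{rogue}, reduce the PDE to a scalar ODE for $y$, and then construct positive $T$-periodic solutions of that ODE by phase-plane analysis.

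First I would exploit (C2) to rewrite the ansatz as a gradient in $x$. Setting $H(\zeta) := \int_{\zeta_0}^{\zeta}\! d\eta/G(\eta)$ on $\Rg(g)$, one has $\nabla(H\circ g)(x) = \nabla g(x)/G(g(x)) = \nabla g(x)/|\nabla g(x)|$, so $U^*(x,t) = y(t)\nabla(H\circ g)(x)$ is a (time-modulated) gradient field in $x$. Consequently $\nabla\times U^* = 0$ in the distributional sense, and the curl-curl term vanishes against any compactly supported $v \in \Hcurl(\R^3)$. Since $s \equiv q \equiv V \equiv 1$ and $|U^*(x,t)| = |y(t)|$ a.e., the weak formulation of \eqref{rogue} collapses to
$$
\int_{\R^3}\bigl(\ddot y(t) - y(t) + |y(t)|^{p-1}y(t)\bigr)\,\frac{\nabla g(x)}{|\nabla g(x)|}\cdot v(x)\,dx = 0
$$
for all admissible $v$; choosing $v$ compactly supported smooth with $\int \nabla g/|\nabla g|\cdot v\,dx \neq 0$ (which exists because $\nabla g \neq 0$ a.e.\ by (C1)) shows this is equivalent to the scalar ODE $\ddot y - y + |y|^{p-1}y = 0$.

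Next I would construct $y$ via phase-plane analysis. The conserved energy is $E(y,\dot y) = \tfrac12 \dot y^2 + V(y)$ with $V(y) = -\tfrac12 y^2 + \tfrac{1}{p+1}|y|^{p+1}$. On $(0,\infty)$ the potential satisfies $V(0) = 0$, has a unique minimum at $y = 1$ with $V(1) = -\tfrac{p-1}{2(p+1)}$, and tends to $+\infty$ as $y \to \infty$. For every energy level $E \in (V(1), 0)$ the ODE admits simple turning points $0 < y_-(E) < 1 < y_+(E)$ and a closed periodic orbit contained in $\{y > 0\}$ with period
$$
T(E) = 2 \int_{y_-(E)}^{y_+(E)} \frac{dy}{\sqrt{2(E - V(y))}}.
$$
Linearization around $y = 1$ gives $\ddot u + (p-1)u = 0$, so $T(E) \to 2\pi/\sqrt{p-1}$ as $E \to V(1)^+$, while $T(E) \to \infty$ as $E \to 0^-$ because the orbit approaches the hyperbolic equilibrium at $y = 0$, at which the passage time diverges. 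Since $T(\cdot)$ is continuous on $(V(1),0)$, the intermediate value theorem yields some $E$ realizing any prescribed $T > 2\pi/\sqrt{p-1}$; the endpoint case $T = 2\pi/\sqrt{p-1}$ is covered by the constant solution $y \equiv 1$, which is trivially $T$-periodic and positive.

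Finally I would verify that $U^*$ meets the regularity conditions of Definition~\ref{def_breather}: $y \in C^\infty(\R)$ yields the $H^2$-in-time regularity on bounded balls immediately, while $U^* \in L^2((0,T);L^2_{\mathrm{loc}}(\R^3)^3)$ and $\nabla\times U^* = 0 \in L^2_{\mathrm{loc}}$ follow from (C1)--(C2) together with the gradient representation $U^* = y(t)\nabla(H\circ g)$. The main subtlety is the quantitative behavior of $T(E)$ at the two endpoints: $T(E)\to 2\pi/\sqrt{p-1}$ as $E \to V(1)^+$ is obtained by rescaling to the linearized variable and applying dominated convergence, whereas the blow-up at $E \to 0^-$ follows by splitting the period integral into a near-equilibrium piece, where comparison with the linearized flow $\ddot y - y = 0$ near $y = 0^+$ gives the logarithmic divergence of the passage time, plus a uniformly bounded remainder.
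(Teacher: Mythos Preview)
Your proposal is correct and follows essentially the same approach as the paper: the gradient representation $U^\ast = y(t)\nabla(H\circ g)$ kills the curl, reducing \eqref{rogue} to the scalar ODE $\ddot y - y + |y|^{p-1}y = 0$, and positive $T$-periodic solutions are produced by phase-plane analysis of the orbits around the center $(1,0)$, with the constant solution $y\equiv 1$ covering the endpoint $T=2\pi/\sqrt{p-1}$.

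The one methodological difference is in how the period map is handled. You argue via continuity of $E\mapsto T(E)$ together with the two endpoint limits (linearization at the center, divergence near the saddle) and then invoke the intermediate value theorem. The paper instead cites Lemma~\ref{phase_plane_rogue}, proved in the Appendix, which establishes that the period function $L$ is \emph{strictly monotone} (so $M=L^{-1}$ exists globally). Your IVT argument is entirely sufficient for Theorem~\ref{dark_constant_coefficients} and is more elementary; the paper's stronger monotonicity is not needed here but is required later in the proof of Theorem~\ref{main_breather_dark}, where the map $\zeta\mapsto c(\zeta)=M(\tilde\sigma(\zeta)T)$ must be well-defined and continuous. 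The verification that $U^\ast$ is a weak solution is also handled slightly differently: you do it directly for this theorem, whereas the paper refers back to the analogous computation (chain rule for $W^{1,1}_{loc}$, distributional vanishing of the curl) carried out once and for all in the proof of Theorem~\ref{main_breather}.
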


\begin{remark} For $T\in (0,\frac{2\pi}{\sqrt{p-1}})$ the exists also a $T$-periodic weak solution of \eqref{rogue} of the above type with sign-changing $y$, cf. Figure~\ref{fig:phase_plane_rogue}. 
\end{remark} 

In case of non-constant coefficients our result is the following.

\begin{theorem} Let $s,q,V:\R^3\to (0,\infty)$ and $g:\R^3\to\R$ satisfy {\rm (C1)--(C3)}, {\rm (B1), (B2)'} and suppose additionally that 
\begin{itemize}
\item[(B3)'] $\tau_\infty:= \lim_{|x|\to\infty}\tau(x)$ exists and and $\sup_{\R^3} |\tau(x)-\tau_\infty|e^{\delta|x|}<\infty$ for some $\delta>0$.
\end{itemize}
Then for $0<\omega\leq \sigma_\infty\sqrt{p-1}$ there exist a $T=\frac{2\pi}{\omega}$-periodic $\R^3$-valued weak solution $U$ of \eqref{rogue} (called breather) which is spatially exponentially localized w.r.t. $U_\infty(x,t)=\tau_\infty U^\ast(x,\sigma_\infty t)$ for some $\sigma_\infty T$-periodic solution $U^\ast$ from Theorem~\ref{dark_constant_coefficients}. The breather generates a continuum of phase-shifted breathers $U_a(x,t)=U(x,t+a(g(x)))$ where $a:\R\to \R$ is an arbitrary continuous function.
\label{main_breather_dark}
\end{theorem}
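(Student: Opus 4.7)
The plan is to apply the gradient-field ansatz used throughout the paper, $U(x,t)=\psi(g(x),t)\nabla g(x)/|\nabla g(x)|$. Since such $U$ is a gradient in $x$, one has $\nabla\times U=0$ and hence $\nabla\times\nabla\times U=0$ in the weak sense. Combined with (C2) and (C3), a standard test-function manipulation in Definition~\ref{def_breather} would reduce \eqref{rogue} to the parametric ODE
$$
\tilde s(\zeta)\partial_t^2\psi-\tilde q(\zeta)\psi+\tilde V(\zeta)|\psi|^{p-1}\psi=0 \qquad \text{for each } \zeta\in\Rg(g).
$$
Rescaling $\psi(\zeta,t)=\tilde\tau(\zeta)\,y(\tilde\sigma(\zeta)t)$ converts this into the $\zeta$-independent Hamiltonian equation $\ddot y-y+|y|^{p-1}y=0$ already studied in Theorem~\ref{dark_constant_coefficients}.

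Next I would work in the phase plane of that Hamiltonian system. The centers $y=\pm 1$ (linearized frequency $\sqrt{p-1}$) are surrounded by nested closed orbits whose periods $T'$ increase from $2\pi/\sqrt{p-1}$ towards $\infty$ as the orbit approaches the homoclinic separatrix through the saddle at $y=0$. For each $\zeta$ I would select the unique positive periodic orbit of period $T'=\tilde\sigma(\zeta)T$, which is possible since (B2)' and $\omega\leq\sigma_\infty\sqrt{p-1}$ imply
$$
\tilde\sigma(\zeta)T\geq\sigma_\infty T=\frac{2\pi\sigma_\infty}{\omega}\geq\frac{2\pi}{\sqrt{p-1}}.
$$
Normalizing the time-origin by $y_{T'}(0)=\max y_{T'}$, $\dot y_{T'}(0)=0$, the candidate $\psi(\zeta,t)=\tilde\tau(\zeta)\,y_{\tilde\sigma(\zeta)T}(\tilde\sigma(\zeta)t)$ is $T$-periodic in $t$ and solves the parametric ODE. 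Re-assembling $U(x,t)=\psi(g(x),t)\nabla g(x)/|\nabla g(x)|$ yields a weak solution of \eqref{rogue}, the regularity requirements of Definition~\ref{def_breather} following from smoothness of the Hamiltonian flow in $t$ together with (C1)–(C3).

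For the spatial exponential localization with respect to $U_\infty(x,t)=\tau_\infty U^\ast(x,\sigma_\infty t)$, where $U^\ast(x,t)=y_{\sigma_\infty T}(t)\nabla g(x)/|\nabla g(x)|$ is the $\sigma_\infty T$-periodic solution from Theorem~\ref{dark_constant_coefficients}, I would invoke the Lipschitz dependence of $y_{T'}$ on the period parameter $T'$ (together with Lipschitz dependence on the scaling factor $\tilde\tau$). Combined with (B1) and (B3)', which give exponential convergence of $\tilde\sigma(g(x))$ and $\tilde\tau(g(x))$ to $\sigma_\infty$ and $\tau_\infty$ at rate $e^{-\delta|x|}$, this would produce
$$
\sup_{t\in\R}\bigl|\psi(g(x),t)-\tau_\infty y_{\sigma_\infty T}(\sigma_\infty t)\bigr|\leq C\bigl(|\tilde\sigma(g(x))-\sigma_\infty|+|\tilde\tau(g(x))-\tau_\infty|\bigr)\leq C'e^{-\delta|x|},
$$
precisely the claimed localization. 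The continuum of phase-shifted breathers then follows for free from the time-translation invariance of the parametric ODE: $\psi(\zeta,t+a(\zeta))$ again solves the $\zeta$-th ODE for any continuous $a:\R\to\R$, so $U_a(x,t)=U(x,t+a(g(x)))$ is again a weak solution.

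The principal obstacle will be the fine study of the period function on the family of periodic orbits surrounding $y=1$: I need it to be strictly monotone (so that $T'\mapsto y_{T'}$ is well-defined) and at least Lipschitz, so that exponential smallness of $\tilde\sigma(g(x))-\sigma_\infty$ and $\tilde\tau(g(x))-\tau_\infty$ translates into exponential smallness of the full orbit. Monotonicity can be obtained from a direct analysis of the period integral associated with the Hamiltonian $H(y,\dot y)=\tfrac12\dot y^2-\tfrac12 y^2+\tfrac{1}{p+1}|y|^{p+1}$, and the Lipschitz bound from standard smooth parameter dependence of the Hamiltonian flow on its initial datum along a compact family of orbits. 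Verifying membership $U\in L^2_{loc}(\R;\Hcurl(B))\cap H^2_{loc}(\R;L^2(B)^3)$ for every ball $B\subset\R^3$ is then a routine consequence of the uniform regularity in $t$ and the parametric continuity in $\zeta$ composed with $g\in W^{1,1}_{loc}(\R^3)$.
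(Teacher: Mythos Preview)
Your approach is essentially the paper's: gradient-field ansatz, reduction to the autonomous ODE $\ddot y-y+|y|^{p-1}y=0$, selection of the positive periodic orbit of minimal period $\tilde\sigma(\zeta)T$ around the center $(1,0)$, and a three-term splitting to prove exponential localization. The monotonicity of the period function that you flag as the ``principal obstacle'' is exactly Lemma~\ref{phase_plane_rogue}, proved in the Appendix by a fairly laborious study of the period integral, so your plan for that part is accurate.

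There is, however, a genuine gap in your localization argument. You assert \emph{Lipschitz} dependence of $y_{T'}$ on the period parameter $T'$ and deduce a bound $Ce^{-\delta|x|}$. This Lipschitz dependence fails at the endpoint $T'=2\pi/\sqrt{p-1}$, which is precisely the relevant limit when $\omega=\sigma_\infty\sqrt{p-1}$ (so that $\sigma(x)T\to 2\pi/\sqrt{p-1}$ as $|x|\to\infty$). The reason is that the amplitude parametrization $\alpha\mapsto T'(\alpha)$ has $\frac{dT'}{d\alpha}\big|_{\alpha=1}=0$ (the center is an equilibrium), so the inverse map $T'\mapsto\alpha(T')$ has infinite derivative there and the orbit amplitude behaves like $\sqrt{T'-2\pi/\sqrt{p-1}}$. ``Smooth dependence on initial data along a compact family of orbits'' gives Lipschitz continuity in the \emph{amplitude}, not in the period. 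The paper's Lemma~\ref{normalization} addresses exactly this point and establishes only $\tfrac12$-H\"older continuity of $c\mapsto y(\cdot;c)$; combined with the $C^1$-regularity of $M=L^{-1}$ up to the endpoint (Lemma~\ref{phase_plane_rogue}(ii)), the resulting estimate is
\[
|U(x,t)-U_\infty(x,t)|\leq C_1|\tau(x)-\tau_\infty|+C_2|\sigma(x)-\sigma_\infty|+C_3\sqrt{|\sigma(x)-\sigma_\infty|}\leq Ce^{-\frac{\delta}{2}|x|},
\]
so one still gets exponential localization, but with the halved rate $\delta/2$ rather than $\delta$. Your argument is therefore repairable, but the Lipschitz claim as stated is incorrect and must be downgraded to $\tfrac12$-H\"older to cover the full range $0<\omega\leq\sigma_\infty\sqrt{p-1}$.
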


\begin{remark} \label{remark_main_breather_dark}
(a) Under slightly weaker assumptions one can directly construct monochromatic complex-valued breather solutions $U:\R^3\times \R\to \C^3$ which are of the form $U(x,t)=\phi(g(x))e^{\mathrm{i}\omega t}\frac{\nabla g(x)}{|\nabla g(x)|}$. In fact, one can drop (B2)' and take $\omega\geq 0$ arbitrary. Then the profile $\phi$ can be taken as $\phi(\zeta) := (\frac{\omega^2}{\tilde\sigma(\zeta)^2}+1)^\frac{1}{p-1}\tilde\tau(\zeta)$ with $\zeta\in\range(g)$. It implies that $U(x,t)$ is exponentially localized with respect to $U_\infty(x,t)= (\frac{\omega^2}{\sigma_\infty^2}+1)^\frac{1}{p-1}\tau_\infty e^{\mathrm{i}\omega t}\frac{\nabla g(x)}{|\nabla g(x)|}$.\\
(b) For $\omega=0$ the construction in (a) yields the stationary solution $U(x)=\tau(x)\frac{\nabla g(x)}{|\nabla g(x)|}$ which is exponentially localized w.r.t $U_\infty(x)=\tau_\infty\frac{\nabla g(x)}{|\nabla g(x)|}$. A second, time-periodic solution $U$, which is also exponentially localized w.r.t. the same function $U_\infty$ is given by Theorem~\ref{main_breather_dark} by taking $\omega=\sigma_\infty\sqrt{p-1}$. 
\end{remark}

In Section~\ref{sec:rogue} we consider solutions (called rogue waves), which are simultaneously localized in space and time. For rogue waves solutions of \eqref{rogue} our main result is the following, and we are not aware of any other existence result.

\begin{theorem} Let $s,q,V:\R^3\to (0,\infty)$ and $g:\R^3\to\R$ satisfy {\rm (C1)--(C3)}. Suppose moreover that
\begin{itemize}
\item[(R)] $\inf_{\R^3}\sigma>0$, $\sup_{\R^3}\tau(x) e^{\delta |x|}<\infty$ for some $\delta>0$.
\end{itemize}
Then there exists an $\R^3$-valued weak solution $U\not\equiv 0$ of \eqref{rogue} on $\R$ (called rogue wave) which is space-time exponentially localized w.r.t. $U_\infty=0$. The rouge wave generates a continuum of phase-shifted rogue waves $U_a(x,t)=U(x,t+a(g(x)))$ with the same space-time exponential localization provided $a:\R\to \R$ is an arbitrary continuous function with $\sup_{\R^3} \frac{|a(g(x))|}{1+|x|}<\infty$. Each of the rogue waves $U_a$ can be approximated (locally uniformly in $x$ and $t$) by a family of $T$-periodic solutions of \eqref{rogue} when $T\to \infty$. However, these $T$-periodic solutions are not localized in space when, e.g., $\sigma\in L^\infty(\R^3)$.
\label{main_rogue}
\end{theorem}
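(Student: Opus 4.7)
The plan is to construct solutions of \eqref{rogue} explicitly via a gradient-field ansatz that reduces the curl-curl wave equation to a parameter-dependent ODE. Seek
\[
U(x,t) \;=\; y(g(x),t)\,\frac{\nabla g(x)}{|\nabla g(x)|}
\]
with a scalar profile $y:\Rg(g)\times\R\to\R$ to be determined. Since $|\nabla g|=G\circ g$, this $U$ is (locally) the gradient of the potential $\Phi(x,t)=F(g(x),t)$ with $F_\zeta(\zeta,t)=y(\zeta,t)/G(\zeta)$, so $U\in\kernel(\nabla\times)$ distributionally; the curl-curl term therefore drops from the weak formulation, and substitution of the ansatz into \eqref{rogue} reduces the problem, at each fixed $\zeta=g(x)$, to the ODE
\[
\ddot y(\zeta,t)=\tilde\sigma(\zeta)^2\bigl(y(\zeta,t)-\tilde\tau(\zeta)^{-(p-1)}|y(\zeta,t)|^{p-1}y(\zeta,t)\bigr).
\]

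Next I would rescale by setting $y(\zeta,t)=\tilde\tau(\zeta)\,w\bigl(\tilde\sigma(\zeta)(t+b(\zeta))\bigr)$ with $b:\Rg(g)\to\R$ an arbitrary continuous phase, which yields the autonomous planar Hamiltonian system $w''=w-|w|^{p-1}w$ with first integral $H(w,w')=\tfrac12(w')^2-\tfrac12 w^2+\tfrac{1}{p+1}|w|^{p+1}$. Phase-plane analysis on the level $H=0$ furnishes a unique positive symmetric homoclinic $w_\ast$ to the saddle $w=0$, with $w_\ast(0)=((p+1)/2)^{1/(p-1)}$, $w_\ast'(0)=0$, and, by linearization at the saddle, $|w_\ast(s)|\leq Ce^{-|s|}$ for all $s\in\R$. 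This yields the candidate
\[
U(x,t) \;=\; \tilde\tau(g(x))\,w_\ast\!\bigl(\tilde\sigma(g(x))(t+b(g(x)))\bigr)\,\frac{\nabla g(x)}{|\nabla g(x)|}.
\]

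To verify this candidate is a weak solution in the sense of Definition~\ref{def_breather}, I would check that $U$ is locally $L^\infty$ in $(x,t)$, smooth in $t$, and equal almost everywhere to $\nabla_x\Phi$, so that $U\in L^2_{loc}(t;\Hcurl_{loc}(\R^3))$ with $\nabla\times U=0$ distributionally and $\partial_t^2 U\in L^2_{loc}(t;L^2_{loc}(\R^3)^3)$. For a test field $v\in\Hcurl$ with compact support the curl-curl contribution vanishes and, by Fubini on a bounded set containing $\supp v$, the remaining integral vanishes for a.e.\ $t$ because $y(\zeta,\cdot)$ satisfies the reduced ODE pointwise in $\zeta$. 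The space-time exponential estimate then follows from
\[
|U(x,t)|\;\leq\; C\,\tilde\tau(g(x))\,e^{-\tilde\sigma(g(x))|t+b(g(x))|}\;\leq\; C\,\tau(x)\,e^{-\sigma_0|t+b(g(x))|}
\]
with $\sigma_0:=\inf_{\R^3}\sigma>0$ by (R); for $b\equiv 0$, combining with $\tau(x)\leq C'e^{-\delta|x|}$ from (R) gives the full space-time exponential decay at rate $\min(\delta,\sigma_0)$.

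For the phase-shifted family with $a$ continuous and $K:=\sup_{x\in\R^3}|a(g(x))|/(1+|x|)<\infty$, continuity of $a$ preserves the gradient-field structure, and the reverse triangle inequality $|t+a(g(x))|\geq|t|-K(1+|x|)$ allows one to split $(x,t)$-space into $\{|t|\geq 2K(1+|x|)\}$, where $|t+a(g(x))|\geq|t|/2$ gives the time decay, and its complement, where $|t|$ is itself controlled by $|x|$ and a slightly smaller joint rate $\delta'=\min(\sigma_0/2,\delta/(1+2K))$ absorbs the shift into the spatial decay of $\tau$. For the approximation by $T$-periodic solutions, each energy between the value of $H$ at the fixed point $w=1$ and $0$ yields a periodic orbit $w_{T(H)}$ of period $T(H)\in(2\pi/\sqrt{p-1},\infty)$ depending continuously on $H$ and converging to $w_\ast$ locally uniformly as $H\to 0^-$; for $T$ so large that $\sigma_0 T\geq 2\pi/\sqrt{p-1}$, at each $\zeta$ one picks the orbit of period $\tilde\sigma(\zeta)T$ to build a $T$-periodic weak solution of \eqref{rogue} approximating $U_a$ locally uniformly. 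The main obstacles I anticipate are the careful verification that the candidate truly lives in the weak-solution class of Definition~\ref{def_breather} despite the only-continuous regularity permitted for $g$, and the sharp bookkeeping in the last step required to keep the full space-time exponential decay throughout the admissible family of phase shifts.
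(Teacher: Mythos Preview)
Your proposal is correct and follows essentially the same route as the paper: the gradient-field ansatz reducing \eqref{rogue} to the autonomous ODE $\ddot y - y + |y|^{p-1}y = 0$, the positive homoclinic $w_\ast$ with $w_\ast(0)=((p+1)/2)^{1/(p-1)}$ and $|w_\ast(s)|\le Ce^{-|s|}$, the case-splitting argument for the phase-shifted decay, and the approximation by the small periodic orbits inside the homoclinic selected via the inverse period function so that the period at level $\zeta$ equals $\tilde\sigma(\zeta)T$. The only point you leave implicit is the final clause of the theorem (non-localization of the $T$-periodic approximants when $\sigma\in L^\infty$), which the paper dispatches in one line by noting that for bounded $\sigma$ the selected periodic orbit at every $x$ stays a positive distance from both the homoclinic and the equilibrium.
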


\begin{remark} (a) Recall from Remark~\ref{remarks_breather} the definition of the set $D$ of all finite accumulation points of $g$ as $|x|\to\infty$. Condition (R) shows that $\tilde\tau(\zeta)$ has to tend to $0$ as $\dist(\zeta,D)\to 0$. And since for the first example in Table~\ref{list} we have $D=\R$ there is no positive and continuous function $\tau=\tilde\tau\circ g$ with property (R) in this case.\\
(b) An explicit example of a family of rogue waves can be given as follows. Assume $s\equiv q\equiv 1$, $p=3$ and $V=V(|x|)\geq Ce^{\delta |x|}$ for some $C,\delta>0$. Then 
$$
U(x,t) = \frac{\sqrt{2}}{\sqrt{V(|x|)}\cosh(t)}\frac{x}{|x|}
$$
is a rogue wave solution of \eqref{rogue} which is space-time exponentially localized w.r.t. $U_\infty=0$. Clearly, $-U$ also solves \eqref{rogue}. 
\label{remarks_rogue}
\end{remark}

The paper is organized as follows. In Section~\ref{sec:breather} we consider breather solutions of \eqref{breather}$_\pm$ and \eqref{rogue}, and we prove the existence results of Theorem~\ref{main_breather}, Theorem~\ref{dark_constant_coefficients}, and Theorem~\ref{main_breather_dark}. Section~\ref{sec:rogue} deals with rogue waves of \eqref{rogue} and contains the proof of Theorem~\ref{main_rogue}. In the Appendix we give the proof of a technical result which is used in the proof of Theorem~\ref{main_breather_dark}.

\section{Proof of the main results for breathers} \label{sec:breather}

We begin by looking for solutions $U$ of \eqref{breather}$_\pm$ of the form $U(x,t) := \psi(g(x),t)\frac{\nabla g(x)}{|\nabla g(x)|}$ under the assumptions (C1)--(C3) as well as (B1) and (B2), (B2)', respectively, and (B3) of Theorem~\ref{main_breather}. In order to explain the underlying idea, we start with a formal calculation which will be made rigorous later. Due to the assumption $|\nabla g(x)|=G(g(x))$ a.e. in $\R^3$ we see that for fixed $t$
$$
U(x,t)= \nabla_x F_t(g(x)) \mbox{ where } F_t'(\zeta)= \psi(\zeta,t)/G(\zeta),
$$
i.e., $F_t(\zeta)$ is a primitive of $\psi(\zeta,t)/G(\zeta)$. With $U$ being a gradient field we find that $U$ solves \eqref{breather}$_\pm$ if and only if the function $\psi=\psi(\zeta,t):\Rg(g)\times\R\to\R$ satisfies 
\begin{equation}
\usetagform{pm}
\label{ode_psi}
\tilde s(\zeta) \ddot \psi(\zeta,t) + \tilde q(\zeta) \psi(\zeta,t) \pm \tilde V(\zeta)|\psi(\zeta,t)|^{p-1}\psi(\zeta,t) = 0 \mbox{ for } (\zeta,t)\in \Rg(g)\times\R
\end{equation}
where $\ddot\psi(\zeta,t)$ stands for $\frac{d^2}{dt^2}\psi(\zeta,t)$. The next obvious reduction is to use the fact that \eqref{ode_psi}$_\pm$ is autonomous and $\zeta=g(x)$ just acts as a parameter. Thus, rescaling \eqref{ode_psi}$_\pm$ suggests to set $\psi(\zeta,t):=\tilde \tau(\zeta)y(\tilde\sigma(\zeta)t)$ where $\tilde\sigma=\sqrt{\frac{\tilde q}{\tilde s}}$, $\tilde\tau=\bigl(\frac{\tilde q}{\tilde V}\bigr)^{\frac{1}{p-1}}$ and $y$ satisfies 
\begin{equation}
\usetagform{pm}
\ddot y + y \pm |y|^{p-1}y = 0.
\label{ode_y}
\end{equation}
The analysis of \eqref{ode_y}$_\pm$ is well--known and we recall next from our previous paper the most important facts, cf. Lemma 4 and Lemma 5 in \cite{plum_reichel}. For the reader's convenience we also present in Figure~\ref{fig:phase_plane_old} plots of the phase plane of \eqref{ode_y}$_\pm$. 

\begin{lemma}
Define the functions $A_\pm: \R^2\to \R$ by $A_\pm(\xi,\eta):= \eta^2+\xi^2\pm\frac{2}{p+1}|\xi|^{p+1}$. Then $A_\pm$ is a first integral for \eqref{ode_y}$_\pm$, i.e., every solution $y$ of \eqref{ode_y}$_\pm$ satisfies $A_\pm(y,\dot y)=\const=c$ for some $c\in\R$. Every bounded orbit of \eqref{ode_y}$_\pm$ is uniquely characterized by the value of $c$ in the range of $A_\pm$. Whenever a solution $y$ on such an orbit is periodic and non-stationary let the minimal period be $L_\pm(c)$ and the maximal amplitude $N_\pm(c):= \max_{t\in \R}|y(t)|$. Then, for the ``+''-case we have: 
\begin{itemize}
\item[(i)] $L_+:(0,\infty)\to (0,2\pi)$ is continuous and strictly decreasing with $\lim_{c\to \infty} L_+(c)=0$ and $L_+(0+) = 2\pi$. 
\item[(ii)]$N_+:[0,\infty)\to [0,\infty)$ is continuous and $N_+(c)\leq \sqrt{c}$ for all $c\geq 0$.
\item[(iii)] $M_+= L_+^{-1}: (0,2\pi)\to (0,\infty)$ has the expansion $\sqrt{M_+(s)} = \sqrt{\alpha}(2\pi-s)^\frac{1}{p-1}(1+O(2\pi-s))$ as $s \to 2\pi-$ for some constant $\alpha>0$. 
\end{itemize}
For the ``-'' case one finds:
\begin{itemize}
\item[(i)] $L_-:(0, \frac{p-1}{p+1})\to (2\pi,\infty)$ is continuous and strictly increasing with $\lim_{c\to \frac{p-1}{p+1}} L_-(c)=\infty$ and $L_-(0+) = 2\pi$.  
\item[(ii)] $N_-:[0, \frac{p-1}{p+1})\to [0,1)$ is continuous  and $N_-(c)\leq \sqrt{\frac{p+1}{p-1}c}$ for all $c\in [0,\frac{p-1}{p+1})$.
\item[(iii)] $M_-= L_-^{-1}: (2\pi,\infty) \to (0,\frac{p-1}{p+1})$ has the expansion $\sqrt{M_-(s)} = \sqrt{\alpha}(s-2\pi)^\frac{1}{p-1}(1+O(s-2\pi))$ as $s \to 2\pi+$ for the same constant $\alpha>0$ as in the ``+'' case.  
\end{itemize}
\label{phase_plane}
\end{lemma}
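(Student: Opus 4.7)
The plan is to exploit the Hamiltonian structure of \eqref{ode_y}$_\pm$: the quantity $A_\pm(y,\dot y)$ is twice the Hamiltonian, so differentiating $A_\pm(y(t),\dot y(t))$ along a solution immediately yields conservation. The phase portrait is then determined by the effective potential $W_\pm(\xi)=\xi^2\pm\tfrac{2}{p+1}|\xi|^{p+1}$. In the ``+'' case $W_+$ is strictly convex, even and coercive, so every level set $\{A_+=c\}$ with $c>0$ is a closed curve about the origin supporting a periodic solution. In the ``$-$'' case $W_-$ has global maxima $\tfrac{p-1}{p+1}$ at $\xi=\pm 1$, so bounded closed orbits exist precisely for $c\in(0,\tfrac{p-1}{p+1})$ and are confined to $\{|y|<1\}$. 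The amplitude bounds follow from the relation $c=W_\pm(N_\pm(c))$: directly $c\geq N_+(c)^2$ in the ``+'' case, and $N_-(c)<1$ together with $c\geq \tfrac{p-1}{p+1}N_-(c)^2$ in the ``$-$'' case.

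The period is given by the quadrature
$$L_\pm(c)=2\int_{-N_\pm(c)}^{N_\pm(c)}\frac{d\xi}{\sqrt{c-W_\pm(\xi)}},$$
and the substitution $\xi=N_\pm(c)\eta$ regularizes the integrable endpoint singularities and supplies continuity of $L_\pm$ and $N_\pm$, together with the boundary values. The limit $L_\pm(c)\to 2\pi$ as $c\to 0+$ follows from the harmonic-oscillator reduction. The limit $L_+(c)\to 0$ as $c\to\infty$ is obtained from the scaling $y=c^{1/(p+1)}z$, which pushes the nonlinear term to dominance and forces fast oscillations. The divergence $L_-(c)\to\infty$ as $c\to\tfrac{p-1}{p+1}-$ is because the orbit approaches the hyperbolic saddles $(\pm 1,0)$, whose linearization has real eigenvalues $\pm\sqrt{p-1}$, giving the classical logarithmic blow-up of the escape time.

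For strict monotonicity I would invoke the classical criterion for centers: if $\xi\mapsto W(\xi)/\xi^2$ is strictly monotone on $(0,\infty)$, then $c\mapsto L(c)$ is strictly monotone in the opposite direction. Since $W_\pm(\xi)/\xi^2=1\pm\tfrac{2}{p+1}|\xi|^{p-1}$ is strictly increasing (resp.\ decreasing) in $|\xi|$, this yields $L_+$ strictly decreasing and $L_-$ strictly increasing, hence continuous strictly monotone inverses $M_\pm$ onto the stated ranges. Alternatively, one can differentiate $L_\pm$ directly after the change of variables and use the explicit form of $W_\pm$.

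Finally, for the asymptotic expansion near $s=2\pi$, rescale $y=\sqrt{c}\,z$; the ODE becomes $\ddot z+z\pm c^{(p-1)/2}|z|^{p-1}z=0$, a regular perturbation of the harmonic oscillator in the small parameter $\varepsilon:=c^{(p-1)/2}$. Expanding the period quadrature (or applying Lindstedt--Poincar\'e) gives
$$L_\pm(c)=2\pi\mp K\varepsilon+O(\varepsilon^{2})=2\pi\mp Kc^{(p-1)/2}\bigl(1+O(c^{(p-1)/2})\bigr)$$
with the same explicit positive constant $K$ in both signs (essentially an integral of a power of $|\cos|$). Solving for $c$ yields $\sqrt{M_\pm(s)}=\sqrt{\alpha}\,|2\pi-s|^{1/(p-1)}(1+O(|2\pi-s|))$ with the common value $\alpha=K^{-2/(p-1)}$, the identity of $\alpha$ across the two cases being the crucial point that is needed later. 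The main technical obstacle is the uniform control of the error term in this expansion near the singular endpoints of the quadrature; I would handle it by splitting the integration domain into a shrinking neighborhood of the turning points, where a local normal form gives precise asymptotics, and the complementary bulk, where dominated convergence applies.
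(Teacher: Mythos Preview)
The paper does not actually prove this lemma here: immediately before the statement it says that the facts are recalled from the authors' earlier paper \cite{plum_reichel} (Lemmas~4 and~5 there), and no argument is given in the present paper. So there is no in-paper proof to compare against; your proposal has to be judged on its own.

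Your outline is mathematically sound and hits all the required points. The first-integral and phase-portrait description are standard. The amplitude bounds follow exactly as you say from $c=W_\pm(N_\pm(c))$ together with $W_+(\xi)\ge \xi^2$ and, for $|\xi|<1$, $W_-(\xi)=\xi^2(1-\tfrac{2}{p+1}|\xi|^{p-1})\ge \tfrac{p-1}{p+1}\xi^2$. The limits $L_\pm(0+)=2\pi$, $L_+(\infty)=0$ via the scaling that makes the nonlinear term dominant, and $L_-(c)\to\infty$ via approach to the hyperbolic saddles are all correct. For monotonicity, the cleanest classical statement is in terms of the restoring force rather than the potential: for $\ddot y+g(y)=0$ with $g$ odd and $yg(y)>0$, strict monotonicity of $g(y)/y$ forces strict monotonicity of the period in the opposite direction. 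Here $g_\pm(y)/y=1\pm|y|^{p-1}$, which immediately gives $L_+$ strictly decreasing and $L_-$ strictly increasing; your version phrased via $W(\xi)/\xi^2$ happens to coincide in sign for these specific potentials, but the $g(y)/y$ formulation is the one that is directly supported by the standard references. Finally, your perturbative computation of the period via $y=\sqrt{c}\,z$ and $\varepsilon=c^{(p-1)/2}$ is exactly the right route to the expansion in~(iii): the first correction is $\mp K\varepsilon$ with the \emph{same} constant $K=\tfrac{2}{p+1}\int_0^{2\pi}|\cos\theta|^{p+1}\,d\theta\Big/\!\!\int_0^{2\pi}\cos^2\theta\,d\theta$ (up to normalization), and inversion gives the common $\alpha=K^{-2/(p-1)}$. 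The only point that deserves more than a sentence in a full write-up is the one you already flag: uniform control of the $O(\varepsilon^2)$ remainder in the period quadrature, which your split into a shrinking turning-point neighborhood plus dominated convergence on the bulk handles.
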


\begin{figure}[ht!]
\centering
\scalebox{1.0}{\includegraphics{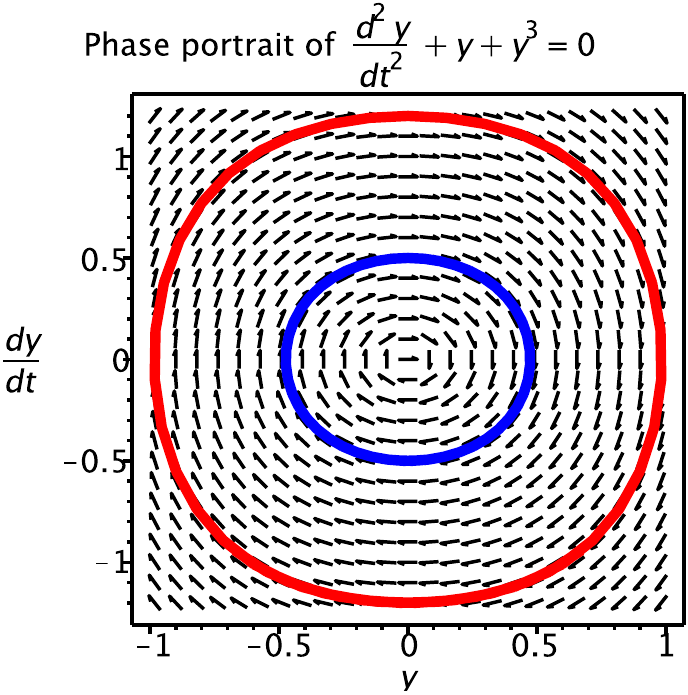}}\quad\scalebox{1.0}{\includegraphics{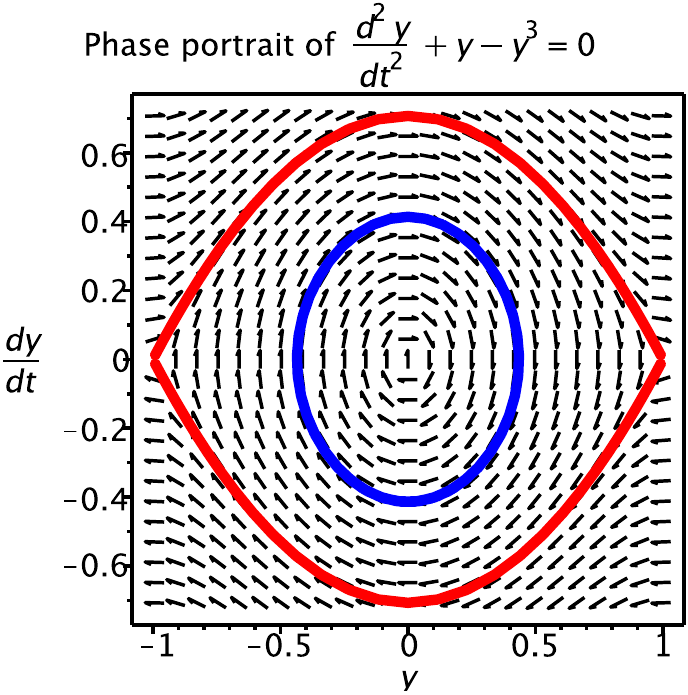}}
\caption{Part of the phase plane of \eqref{ode_y}$_\pm$ for $p=3$. Left: ``$+$''case with periodic orbits (blue, red). Right: ``$-$''case with periodic orbit (blue), two heteroclinic orbits (red).}
\label{fig:phase_plane_old}
\end{figure}

\medskip

\noindent
{\bf Proof of Theorem~\ref{main_breather}:} We give the proof only in the ``+'' case and indicate at the end of the proof the necessary changes for the ``-'' case. We begin by choosing a continuous curve $\gamma: [0,\infty)\to \R^2$ in phase space such that $A_+(\gamma(c))=c$, where $A_+$ is the first integral from Lemma~\ref{phase_plane}. Such a curve is e.g. given by $\gamma(c)=(0,\sqrt{c})$. There is a continuum of other possible choices of $\gamma$. The choice of $\gamma$ actually only selects a particular member of the continuum of phase-shifted breathers (we will comment on this aspect after the end of the proof in Remark~\ref{details}).

\medskip

Let us denote by $y(t;c)$ the solution of \eqref{ode_y}$_+$ with $\bigl(y(0;c),\dot y(0;c))\bigr)=\gamma(c)$. Then $y:\R\times [0,\infty)\to \R$ is a $C^2$-function and $y(t;c)$ is $L_+(c)$-periodic in the $t$-variable. Now we define the solution $\psi:\Rg(g)\times\R\to\R$ of \eqref{ode_psi}$_+$ by 
\begin{equation}
\psi(\zeta,t) := \tilde\tau(\zeta) y(\tilde\sigma(\zeta)t; c) \quad \mbox{ with } \quad \tilde\sigma(\zeta) = \left(\frac{\tilde q(\zeta)}{\tilde s(\zeta)}\right)^{1/2}, \quad \tilde\tau(\zeta) = \left(\frac{\tilde q(\zeta)}{\tilde V(\zeta)}\right)^\frac{1}{p-1}.
\label{def_psi_plus}
\end{equation}
The requirement of $T$-periodicity of $\psi$ in the $t$-variable tells us how to choose $c$ as a function of the variable $\zeta\in \Rg(g)$, i.e.,
$$
\tilde\sigma(\zeta) T \stackrel{!}{=} L_+(c). 
$$
Recall from Lemma~\ref{phase_plane} the definition $M_+=L_+^{-1}$ and that $M_+$ has a continuous extension $M_+:(0,2\pi]\to \R$ which is strictly decreasing. Now
\begin{equation}
\label{def_c_plus}
c(\zeta) := M_+(\tilde\sigma(\zeta)T) \quad \mbox{ with } T=\frac{2\pi}{\omega}
\end{equation}
has to be inserted into \eqref{def_psi_plus}. Note that the assumption (B2) of Theorem~\ref{main_breather} guarantees that $c(\zeta)$ is well-defined for $\zeta\in \Rg(g)$. Next we show that $\psi(g(x),t)$ is exponentially decaying to $0$ as $|x|\to \infty$. First note the estimate 
\begin{align*}
|\psi(g(x),t)| & \leq \tau(x) N_+(c(g(x)))  \\
 & \leq \tau(x) \sqrt{c(g(x))} \mbox{ by Lemma~\ref{phase_plane}(ii)}\\
 & \leq (\sup_{\R^3} \tau) \sqrt{M_+(\sigma(x)T)}. 
\end{align*}
By assumption (B1) of Theorem~\ref{main_breather} the argument of $M_+$ in the above inequality tends to $2\pi$ as $|x|\to \infty$. By Lemma~\ref{phase_plane}(iii) we have the estimate 
\begin{equation}
\label{est_psi_plus}
|\psi(g(x),t)| \leq  (\sup_{\R^3}\tau) \sqrt{\alpha}\left(2\pi-\sigma(x)T\right)^\frac{1}{p-1}O(1)  \mbox{ as } |x|\to \infty.
\end{equation}
Using again assumption (B1) from Theorem~\ref{main_breather} the above estimate yields 
$|\psi(g(x),t)| \leq  C \exp(-\tilde\delta |x|)$ for $x\in\R^3$ and some $C,\tilde\delta>0$ which proves the exponential decay of \begin{equation} \label{def_sol}
U(x,t):=\psi(g(x),t)\frac{\nabla g(x)}{|\nabla g(x)|}
\end{equation}
as $|x|\to\infty$. 

\medskip

Next we verify that $U$ as in \eqref{def_sol} is indeed a weak breather solution of \eqref{breather}$_+$. First note that (B1), continuity and positivity of $\sigma$ imply that $\inf_{\R^3} \sigma>0$ and the same also holds for $\tilde\sigma$. This implies that the function $x \mapsto c(g(x))$ with $c$ from \eqref{def_c_plus} is continuous and bounded on $\R^3$. Using that $\inf_{\Rg(g)} G>0$ by (C2) we obtain that 
$$
\zeta \mapsto f_t(\zeta):= \tilde\tau(\zeta) y(\tilde\sigma(\zeta)t;c(\zeta))\frac{1}{G(\zeta)}
$$
as well as
$$
\zeta \mapsto \psi(\zeta,t)= \tilde\tau(\zeta) y(\tilde\sigma(\zeta)t;c(\zeta))
$$
are (uniformly w.r.t. $t\in [0,T]$) bounded and continuous functions of $\zeta\in\Rg(g)$. If we denote by $F_t:\Rg(g)\to \R$ a primitive function of $f_t:\Rg(g)\to (0,\infty)$ then the chain rule for $W^{1,1}_{loc}$-functions, cf. \cite[Lemma 7.5]{gilbarg_trudinger}, tells us that $F_t\circ g \in W^{1,1}_{loc}(\R^3)$ and 
$$
\nabla (F_t\circ g) = (f_t\circ g)\nabla g = U(\cdot,t)
$$
because $F_t$ is continuously differentiable with bounded derivative and $g\in W^{1,1}_{loc}(\R^3)$ by definition. This implies that in a distributional sense $\nabla\times U=0$ as the following calculation for $\phi\in C_c^\infty(\R^3)$ shows:
\begin{align*}
\int_{\R^3} U(x,t)\cdot \nabla\times \phi(x)\,dx &= \int_{\R^3} \psi(g(x),t)\frac{\nabla g(x)}{|\nabla g(x)|} \cdot \nabla\times \phi(x)\,dx  \\
&= \int_{\R^3} \nabla (F_t\circ g)(x)\cdot \nabla\times \phi(x)\,dx \\
& = - \int_{\R^3} F_t(g(x)) \underbrace{\nabla\cdot\nabla\times}_{=0} \phi(x)\,dx = 0. 
\end{align*}
Hence we have found that $\nabla\times U=0$ for every fixed $t\in [0,T]$. Together with continuity, boundedness and exponential decay of the map $\R^3\times [0,T]\ni(x,t) \mapsto \psi(g(x),t)$, we see that $U\in C^2([0,T];\Hcurl(\R^3))$. The fact that $U$ is a weak solution of \eqref{breather} then reduces to multiplying \eqref{ode_psi}$_+$ with a compact support function $v\in\Hcurl(\R^3)$ and integrating over $\R^3$.  

\medskip

The asserted continuum of solutions $U(x,t+a(g(x)))$ arising from arbitrary continuous functions $a:\R\to\R$ is a direct consequence of the fact that \eqref{ode_psi}$_\pm$ is autonomous with respect to $t$ and that $\zeta=g(x)$ plays the role of a parameter. Additionally, it is important to note that now $U(x,t+a(g(x)))$ is the $x$-gradient of the function $\tilde F_t\circ g$, where $\frac{d}{d\zeta}\tilde F_t(\zeta)= \tilde\tau(\zeta)y(\tilde\sigma(\zeta)(t+a(\zeta));c(\zeta))\frac{1}{G(\zeta)}$. 

\medskip

Finally, let us comment on the changes that are necessary in the ``-''case. Here the parameter $c$ of the first integral ranges in $[0,\frac{p-1}{p+1})$ and the period function $L_-:[0,\frac{p-1}{p+1})\to [2\pi,\infty)$ is continuous and strictly increasing with inverse $M_-=L_-^{-1}: [2\pi,\infty)\to [0,\frac{p-1}{p+1})$. The choice of the curve $\gamma: [0,\frac{p-1}{p+1})\to \R^2$ in phase space is again such that $A_-(\gamma(c))=c$, but additionally we require $\gamma(0)=(0,0)$ in order to ensure that the curve $\gamma$ hits the small periodic orbits inside the two heteroclinics connecting $(\pm 1,0)$. In the ``+''case the normalization $\gamma(0)=(0,0)$ was automatically fulfilled. In \eqref{def_c_plus} we replace $M_+$ with $M_-$ and this time assumption (B2)' guarantees that $\zeta\mapsto c(\zeta)$ is well-defined on $\Rg(g)$. The definition of $\psi$ in \eqref{def_psi_plus} remains the same and the verification of its properties as well as the properties of the solution $U$ follows exactly the same lines as before. This finishes the proof of Theorem~\ref{main_breather}. \qed     

\begin{remark} \label{details}
(a) Here we give some details on the observation that the constructed breathers may have compact support. We assume in addition to the assumptions of Theorem~\ref{main_breather} that the coefficients $q,s$ are chosen in such way that $\supp(\tilde\sigma-\omega)\subset (-R,R)$ and that (for simplicity of the example) $|g(x)|\to \infty$ for $|x|\to \infty$ (weaker assumptions on $g$ are also possible). Note from Lemma~\ref{phase_plane} that $L_\pm(0)=2\pi$ and hence $M_\pm(2\pi)=0$. Therefore, whenever $\zeta\in \Rg(g)$ is such that $\tilde \sigma(\zeta)=\omega=\frac{2\pi}{T}$ then from \eqref{def_c_plus} we obtain $c(\zeta)=0$ and consequently $\psi(\zeta,\cdot)\equiv 0$. By choosing a suitable $\rho>0$ we find for $|x|\geq \rho$ that $|\zeta|=|g(x)|\geq R$ and thus $U(x,t+a(g(x)))=0$. Hence the entire family $U(x,t+a(g(x)))$ has compact support in $\overline{B}_\rho(0)$. \\
(b) As in \cite{plum_reichel} we will now comment on the choice of the initial curve $\gamma(c)=(0,\sqrt{c})$ which led to the solution family $y(t;c)$ such that $(y(0;c), \dot y(0;c))=\gamma(c)$. Our objective was to determine \emph{some} continuous curve such that $A_+(\gamma(c))=c$. The particular choice $\gamma(c)=(0,\sqrt{c})$ is convenient but arbitrary. Let us explain other possible choices of $\gamma$. E.g., using our previous choice for $y$, we may take
$$
\hat \gamma(c) := \bigl(y(b(c);c), \dot y(b(c);c)\bigr)
$$
for an arbitrary function $b\in C([0,\infty);\R)$. Clearly, $A_+(\hat\gamma(c))=A_+\left(y(b(c);c), \dot y(b(c);c)\right)=c$ since $A_+$ is a first integral of \eqref{ode_y}$_+$. With the new curve $\hat\gamma$ we can define a new solution family $\hat y(t;c)$ through the initial conditions
$$
\bigl(\hat y(0;c), \dot{\hat y}(0;c)\bigr) = \hat\gamma(c)
$$
By uniqueness of the initial value problem the new and old solution families have the simple relation
$$
\hat y(t;c) = y(t+b(c);c).
$$
In order to see the effect of the choice of the new curve let us compare the solutions $U$, $\hat U$ generated by $\gamma$, $\hat\gamma$, i.e.,
$$
U(x,t) = \tau(x) y(\sigma(x) t;c(g(x)))\frac{\nabla g(x)}{|\nabla g(x)|},
$$
where $c(\zeta)=L_+^{-1}(\tilde \sigma(\zeta)T)$. Likewise
\begin{align*}
\hat U(x,t) &= \tau(x) \hat y(\sigma(x)t; c(g(x)))\frac{\nabla g(x)}{|\nabla g(x)|} \\
&= \tau(x)y(\sigma(x)t+b(c(g(x)));c(g(x)))\frac{\nabla g(x)}{|\nabla g(x)|} \\
&= U(x,t+a(g(x))),
\end{align*}
where $a(\zeta) = b(c(\zeta))/\tilde\sigma(\zeta)$ is a continuous function on $\range g$. Hence, this different choice of the initial curve leads to a phase-shifted breather.
\end{remark}

\medskip

The proofs of Theorem~\ref{dark_constant_coefficients} and Theorem~\ref{main_breather_dark} follow the same idea and we look for breather solutions of \eqref{rogue} with the same ansatz $U(x,t) := \psi(g(x),t)\frac{\nabla g(x)}{|\nabla g(x)|}$ as in the proof of the previous theorem. Thus, in order to obtain that $U$ solves \eqref{rogue} the function $\psi=\psi(\zeta,t):\Rg(g)\times\R\to\R$ needs to satisfy  
\begin{equation}
\label{ode_psi_rogue}
\tilde s(\zeta) \ddot \psi(\zeta,t) - \tilde q(\zeta) \psi(\zeta,t) + \tilde V(\zeta)|\psi(\zeta,t)|^{p-1}\psi(\zeta,t) = 0 \mbox{ for } (\zeta,t)\in \Rg(g)\times\R.
\end{equation}
Rescaling \eqref{ode_psi_rogue} as before by setting $\psi(\zeta,t):=\tilde \tau(\zeta)y(\tilde\sigma(\zeta)t)$ with $\tilde\sigma, \tilde\tau$ as in \eqref{def_psi_plus} the function $y$ has to satisfy 
\begin{equation}
\ddot y - y + |y|^{p-1}y = 0.
\label{ode_y_rogue}
\end{equation}
The analysis of \eqref{ode_y_rogue} is not difficult due to the first integral
\begin{equation} \label{first_integral_rogue}
A(\xi,\eta) = \eta^2-\xi^2+\frac{2}{p+1}|\xi|^{p+1},
\end{equation}
i.e., $A(y,\dot y)=\const = c$ for every solution $y$ of \eqref{ode_y_rogue}. Orbits are uniquely characterized by the value of $c\in (\frac{1-p}{1+p},\infty)$. The origin is a saddle point attached to two homoclinic orbits corresponding to $c=0$. The points $(\pm 1,0)$ correspond to $c=\frac{1-p}{1+p}$ and are stable centers surrounded by ``small'' periodic orbits for $c\in (\frac{1-p}{1+p},0)$. The values $c>0$ corresponds to ``large'' periodic orbits surrounding the union of the two homoclinic orbits, cf. Figure~\ref{fig:phase_plane_rogue} for a sketch of the phase plane in the case $p=3$.   

\begin{figure}[ht!]
\centering
\scalebox{1.0}{\includegraphics{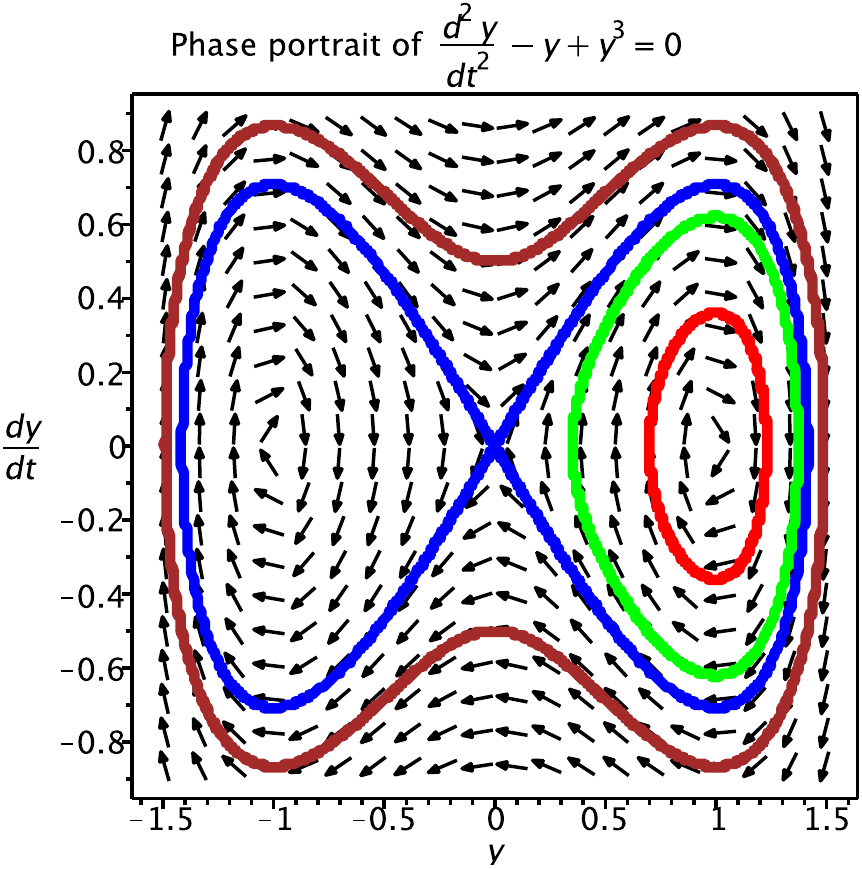}}
\caption{Part of the phase plane of \eqref{ode_y_rogue} for $p=3$ with ``small'' periodic orbits (green,red), ``large'' periodic orbit (brown) and homoclinic orbits (blue).}
\label{fig:phase_plane_rogue}
\end{figure}

The proofs of both Theorem~\ref{dark_constant_coefficients} and Theorem~\ref{main_breather_dark} rely on the following analogon of Lemma~\ref{phase_plane} for \eqref{ode_y_rogue}. Its proof is given in the Appendix.

\begin{lemma}
For every solution $y\geq 0$ on an orbit of \eqref{ode_y_rogue} given by $c\in (\frac{1-p}{1+p},0)$ let the minimal period be $L(c)$. Then we have:
\begin{itemize}
\item[(i)] $L:(\frac{1-p}{1+p},0)\to (\frac{2\pi}{\sqrt{p-1}},\infty)$ is continuously differentiable and strictly increasing with $L(\frac{1-p}{1+p}+) = \frac{2\pi}{\sqrt{p-1}}$,  $\lim_{c\to 0-} L(c)=+\infty$.
\item[(ii)] $M= L^{-1}: (\frac{2\pi}{\sqrt{p-1}},\infty)\to (\frac{1-p}{1+p},0)$ is continuously differentiable, strictly increasing, and $M(\frac{2\pi}{\sqrt{p-1}}+) = \frac{1-p}{1+p}$ and $M'(\frac{2\pi}{\sqrt{p-1}}+)= \frac{12(p-1)^{3/2}}{\pi p(p+3)}$.
\end{itemize}
\label{phase_plane_rogue}
\end{lemma}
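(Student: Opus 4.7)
The plan is to derive a regularized integral representation for $L(c)$, from which the regularity, monotonicity, and endpoint behavior can be extracted, and then to carry out a Lindstedt--Poincar\'e expansion at the center $(1,0)$ for the explicit value of $M'$ at the left endpoint.

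Setting $V(y) := -\tfrac{y^2}{2} + \tfrac{y^{p+1}}{p+1}$ for $y \geq 0$, the first integral $A(y,\dot y) = c$ of \eqref{ode_y_rogue} reads $\dot y^2 = c - 2V(y)$. The potential $V$ has its unique positive minimum at $y=1$ with $V(1) = \tfrac{1-p}{2(p+1)} = \tfrac{c_0}{2}$ and its only positive root at $y_2^\ast := \bigl(\tfrac{p+1}{2}\bigr)^{1/(p-1)}$. For each $c \in (c_0,0)$ the equation $V(y) = c/2$ has exactly two positive solutions $0 < y_1(c) < 1 < y_2(c) < y_2^\ast$, both smooth in $c$ by the implicit function theorem (since $V'$ is nonzero at the turning points), and
$$
L(c) = 2\int_{y_1(c)}^{y_2(c)} \frac{dy}{\sqrt{c - 2V(y)}}.
$$
Writing $F(y) := 2V(y) - c_0 \geq 0$ and using the two smooth inverse branches $F_-:[0,-c_0]\to[0,1]$ (decreasing) and $F_+:[0,-c_0]\to[1,y_2^\ast]$ (increasing), then substituting $w = h\sin^2\theta$ with $h := c - c_0 \in (0,-c_0)$ removes the square-root singularities at both turning points and yields
$$
L(c) = 4\sqrt{h}\int_0^{\pi/2}\bigl(F_+'(h\sin^2\theta) - F_-'(h\sin^2\theta)\bigr)\sin\theta \, d\theta.
$$
The integrand is smooth in $(\theta,h)$ on $[0,\pi/2]\times(0,-c_0)$, so $L \in C^1((c_0,0))$ (in fact $C^\infty$).

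For strict monotonicity I would differentiate this representation in $h$ and show that the resulting expression is strictly positive, exploiting the polynomial form of $V$ (hence of $F_\pm$). Equivalently, introducing the action $J(c) := \tfrac{1}{\pi}\int_{y_1(c)}^{y_2(c)} \sqrt{c - 2V(y)}\,dy$ which satisfies $L(c) = 2\pi J'(c)$, strict monotonicity of $L$ is the same as strict convexity of $J$, and this is amenable to a Chicone-type criterion adapted to the polynomial potential at hand. For the endpoint limits, as $c\to c_0^+$ the orbit shrinks to the center $(1,0)$ whose linearization $\ddot u + (p-1)u = 0$ forces $L\to 2\pi/\sqrt{p-1}$, and as $c\to 0^-$ the orbit accumulates on the homoclinic through the saddle $(0,0)$ whose linearized flow has eigenvalues $\pm 1$, so the transit time near the saddle diverges logarithmically and $L\to\infty$.

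To obtain $M'(\tfrac{2\pi}{\sqrt{p-1}}+)$, I would carry out a second-order Lindstedt--Poincar\'e expansion at the minimum. Writing $y = 1+u$ turns the ODE into
$$
\ddot u + (p-1)u + \tfrac{p(p-1)}{2}u^2 + \tfrac{p(p-1)(p-2)}{6}u^3 + O(u^4) = 0,
$$
and the classical period--amplitude formula for a Duffing-type oscillator $\ddot u + \omega_0^2 u + \alpha u^2 + \beta u^3 = 0$ yields $L = \tfrac{2\pi}{\omega_0}\bigl[1 + \bigl(\tfrac{5\alpha^2}{12\omega_0^4} - \tfrac{3\beta}{8\omega_0^2}\bigr)a^2 + O(a^4)\bigr]$; for our coefficients $\omega_0^2 = p-1$, $\alpha = \tfrac{p(p-1)}{2}$, $\beta = \tfrac{p(p-1)(p-2)}{6}$, the bracket simplifies to $\tfrac{p(p+3)}{24}$. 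Combining with the turning-point expansion $c - c_0 = (p-1)a^2 + O(a^3)$ gives
$$
L(c) = \frac{2\pi}{\sqrt{p-1}} + \frac{\pi p(p+3)}{12(p-1)^{3/2}}(c - c_0) + O((c-c_0)^2),
$$
and the inverse function theorem delivers the claimed value $M'(\tfrac{2\pi}{\sqrt{p-1}}+) = \tfrac{12(p-1)^{3/2}}{\pi p(p+3)}$. The hardest step I expect is the global strict monotonicity of $L$ on all of $(c_0, 0)$: the Lindstedt expansion only yields strictness near $c_0$, and extending this to the entire interval requires either a genuinely global derivative estimate on the integral representation above or the invocation of a period-function monotonicity criterion tailored to this polynomial $V$.
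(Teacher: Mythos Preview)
Your outline is sound and your Lindstedt--Poincar\'e computation for $L'(c_0{+})$ is correct; it is arguably cleaner than the paper's route, which instead pushes an explicit integral formula for $L'(c)$ to the limit $c\to c_0^+$. Your trigonometric regularization is a variant of the paper's substitution $y=k_\pm^{-1}(\tilde c z)$ (with $k(y)=1-y^2+\tfrac{2}{p+1}(y^{p+1}-1)$, $\tilde c=c-c_0$), and both yield $C^1$-regularity on the open interval without difficulty. The endpoint limits you give via linearization at the center and divergence near the saddle are also fine.

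The genuine gap is exactly the one you flag: global strict monotonicity of $L$ on all of $(c_0,0)$. Neither ``differentiate the regularized integral and observe positivity'' nor ``invoke a Chicone-type criterion'' is a proof as stated; the latter would still require checking a convexity hypothesis on, say, $G/g^2$ for the shifted nonlinearity, and this is not immediate for general $p>1$. The paper fills this gap by a direct computation: from
\[
L'(c) = \frac{1}{\tilde c}\int_{N_-(c)}^{N_+(c)}\frac{(k')^2-2kk''}{(k')^2}\,\frac{dy}{\sqrt{\tilde c - k(y)}}
\]
one integration by parts yields
\[
\tilde c\,L'(c) = 8p(p-1)\int_{N_-(c)}^{N_+(c)}\frac{y^{p-2}}{k'(y)^4}\,\Phi(y)\,\sqrt{\tilde c-k(y)}\,dy,
\qquad
\Phi(y) := 3y^{2-p}k''(y)\!\int_1^y t^{p-2}k(t)\,dt - k(y)k'(y).
\]
Monotonicity then reduces to $\Phi>0$ on $(0,\infty)\setminus\{1\}$, which follows from $\Phi(1)=0$ together with an elementary sign analysis of $\Phi'$ (for $p\ge 2$) or of $(y^{p-2}\Phi)'$ (for $1<p<2$). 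This is the substantive step your proposal defers; without it, neither (i) nor (ii) is established.
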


Since \eqref{ode_y_rogue} is autonomous, we need to normalize the solutions $y=y(\cdot;c)$ by choosing for each value of $c$ initial conditions at $t=0$. This is somewhat arbitrary. The next lemma suggests a certain normalization and consequently develops continuity properties of the map $c\mapsto y(\cdot;c)$. 

\begin{lemma} \label{normalization}
Let $a: [1,(\frac{p+1}{2})^{1/(p-1)}] \to [\frac{1-p}{1+p},0]$ be the strictly increasing function given by $a(\xi):=-\xi^2+\frac{2}{p+1}\xi^{p+1}$ so that $A(a^{-1}(c),0)=c$ for all $c\in [\frac{1-p}{1+p},0]$. If we normalize the solutions $y(t;c)$ of \eqref{ode_y_rogue} by $y(0;c)=a^{-1}(c)$ and $\dot y(0;c)=0$ then the map $c\mapsto y(\cdot,c)$ is $\frac{1}{2}$-H\"older continuous uniformly for $c\in [\frac{1-p}{1+p},0]$ in the sense that 
$$
\|y(\cdot,c_1)-y(\cdot,c_2)\|_{L^\infty([0,T])} \leq C_T\sqrt{|c_1-c_2|}
$$
with a constant $C_T$ depending on the chosen time-interval $[0,T]$. 
\end{lemma}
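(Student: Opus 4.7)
The plan is to split the estimate into two independent ingredients: (i) standard continuous dependence of the ODE flow on the initial datum, and (ii) a one-dimensional inversion estimate showing that $c\mapsto a^{-1}(c)$ is itself $\tfrac12$-H\"older continuous. Combining them produces the required bound with an exponential time dependence absorbed into $C_T$.

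For (i), I would rewrite \eqref{ode_y_rogue} as a first order system $\dot Y = F(Y)$ with $Y=(y,\dot y)$ and $F(y,v)=(v,\,y-|y|^{p-1}y)$. Since $A(y(\cdot;c),\dot y(\cdot;c))\equiv c\leq 0$, the identity $\dot y^2 = y^2-\tfrac{2}{p+1}|y|^{p+1}+c$ yields the a priori bounds $|y(t;c)|\leq (\tfrac{p+1}{2})^{1/(p-1)}$ and a uniform bound on $|\dot y(t;c)|$. Consequently every trajectory $Y(\cdot;c)$ with $c\in[\tfrac{1-p}{1+p},0]$ lies in a common compact set $K\subset\R^2$, on which $F$ is globally Lipschitz with some constant $L=L(p)$. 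Gronwall's inequality then delivers
$$
\|y(\cdot,c_1)-y(\cdot,c_2)\|_{L^\infty([0,T])}\leq e^{LT}\bigl|a^{-1}(c_1)-a^{-1}(c_2)\bigr|
$$
for all $c_1,c_2\in[\tfrac{1-p}{1+p},0]$.

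For (ii), I would exploit the fact that $a'(\xi)=2\xi(\xi^{p-1}-1)$ vanishes at $\xi=1$ while $a''(1)=2(p-1)>0$. A Taylor expansion near $\xi=1$ therefore gives $a(\xi)-a(1)\geq c_0(\xi-1)^2$ on some right-neighbourhood $[1,1+\varepsilon]$, and since $a'$ is strictly positive (and bounded below) on $[1+\varepsilon,(\tfrac{p+1}{2})^{1/(p-1)}]$, the same quadratic lower bound extends (with a smaller $c_0$) to the entire interval $[1,(\tfrac{p+1}{2})^{1/(p-1)}]$. Inverting yields $a^{-1}(c)-1\leq \sqrt{(c-a(1))/c_0}$, and combining with the fact that $a$ is bi-Lipschitz on any subinterval separated from $\xi=1$ produces the global estimate $|a^{-1}(c_1)-a^{-1}(c_2)|\leq C\sqrt{|c_1-c_2|}$ valid uniformly on $[\tfrac{1-p}{1+p},0]$.

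Putting the two estimates together gives the claim with $C_T:=Ce^{LT}$. The only non-smooth feature of the whole problem is the square-root behaviour of $a^{-1}$ at the quadratic critical point $\xi=1$, which geometrically reflects the small periodic orbits collapsing onto the centre $(1,0)$; once that algebraic degeneracy is isolated, the ODE dependence step is entirely classical. The main (mild) obstacle in the argument is therefore establishing the bi-Lipschitz versus quadratic-tangency dichotomy for $a$ near the endpoint $c=\tfrac{1-p}{1+p}$, but this is elementary and self-contained.
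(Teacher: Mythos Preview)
Your proposal is correct and follows essentially the same two-step strategy as the paper: establish Lipschitz dependence of the ODE flow on the initial datum via Gronwall, and show separately that $c\mapsto a^{-1}(c)$ is $\tfrac12$-H\"older continuous. The only cosmetic difference is that for step (ii) the paper observes directly that $c\mapsto (a^{-1}(c)-1)^2$ is $C^1$ on the closed interval (hence Lipschitz), from which the $\tfrac12$-H\"older bound on $a^{-1}$ follows at once via $|\sqrt{u}-\sqrt{v}|\leq \sqrt{|u-v|}$; this is slightly cleaner than your Taylor-and-patch argument but amounts to the same thing.
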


\begin{proof} We first note that the function $c\mapsto (a^{-1}(c)-1)^2$ is continuously differentiable on the interval $[\frac{1-p}{1+p},0]$ since by a direct computation one can check that the derivative has limits at the endpoints of the interval. Since $c\mapsto (a^{-1}(c)-1)^2$ is Lipschitz one can easily check that $c\mapsto a^{-1}(c)$ is uniformly $\frac{1}{2}$-H\"older continuous on $[\frac{1-p}{1+p},0]$.

\medskip

Next we consider $c_1, c_2\in [\frac{1-p}{1+p},0]$ and set $z(t) := y(t;c_1)-y(t;c_2)$. Then $\ddot z -z+q(t) z=0$ with $q(t) = \frac{|y(t;c_1)|^{p-1} y(t;c_1) - |y(t;c_2)|^{p-1} y(t;c_2)|}{y(t;c_1)-y(t;c_2)}$. Since $y(t;c_1)$ and $y(t;c_2)$ are small periodic solutions inside the homoclinic they are both bounded from above by $(\frac{p+1}{2})^\frac{1}{p-1}$, and hence $|q(t)| \leq \frac{p(p+1)}{2}$. If we use that 
$$
\frac{d}{dt} (z^2+\dot z^2) = 2z\dot z +2\dot z (1-q(t))z = 2z\dot z (2-q(t)) \leq (z^2+\dot z^2)C 
$$
with $C=2+\frac{p(p+1)}{2}$ then a Gronwall argument implies $z(t)^2+ \dot z(t)^2 \leq (a^{-1}(c_1)-a^{-1}(c_2))^2 e^{TC}$. Therefore the uniform $\frac{1}{2}$-H\"older continuity of $c\mapsto a^{-1}(c)$ implies $|y(t;c_1)-y(t;c_2)|=|z(t)|\leq C_T\sqrt{|c_1-c_2|}$ for all $t\in [0,T]$ as claimed.
\end{proof}

\noindent
{\bf Proof of Theorem~\ref{dark_constant_coefficients}:} By Lemma~\ref{phase_plane_rogue} we know that for every given period $T> \frac{2\pi}{\sqrt{p-1}}$ there exists a value $c=M(T)=L^{-1}(T)$ such that on the level set $A^{-1}(c)$ there are two periodic orbits with period $T$, cf. Figure~\ref{fig:phase_plane_rogue}. This extends to $T=\frac{2\pi}{\sqrt{p-1}}$ where for $c=M(T)$ we have that $A^{-1}(c)$ consist of the two equilibria $(\pm 1,0)$. Choosing $y=y(t;c)$ to be the positive periodic orbit (positive equilibrium in case $T=\frac{2\pi}{\sqrt{p-1}}$) with $A(y,\dot y)=c$ the theorem follows if we set $U^\ast(x,t) := y(t;c)\frac{\nabla g(x)}{|\nabla g(x)|}$. 
\qed

\medskip

\noindent
{\bf Proof of Theorem~\ref{main_breather_dark}:}  First we choose a reference function $U_\infty$  at spatial infinity by $U_\infty(x,t)= \tau_\infty y(\sigma_\infty t; M(\sigma_\infty T))\frac{\nabla g(x)}{|\nabla g(x)|}$ where by assumption $\sigma_\infty T\geq \frac{2\pi}{\sqrt{p-1}}$. Here we use from Lemma~\ref{normalization} the normalized family $y(\cdot;c)$ of positive periodic orbits lying on the level set $A^{-1}(c)$ of the first integral function $A$. 

\medskip

Next we construct a $T=\frac{2\pi}{\omega}$-periodic solution of \eqref{ode_psi_rogue} by 
$$
\psi(\zeta,t):=\tilde \tau(\zeta)y(\tilde\sigma(\zeta)t;c(\zeta)) \quad \mbox{ with }\quad c(\zeta):=M(\tilde\sigma(\zeta)T).
$$
Note that $c(\zeta)$ is well defined for $\zeta\in\Rg(g)$ since by (B2)' we have $\tilde\sigma(\zeta)T\geq\sigma_\infty T\geq \frac{2\pi}{\sqrt{p-1}}$. In order to show that $U(x,t)=\psi(x,t)\frac{\nabla g(x)}{|\nabla g(x)|}$ is exponentially localized w.r.t. $U_\infty$ we need to estimate
\begin{align*}
|U(x,t)-U_\infty(x,t)| = & |\tau(x) y(\sigma(x)t;M(\sigma(x)T))-\tau_\infty y(\sigma_\infty t; M(\sigma_\infty T))| \\
\leq & |\tau(x)-\tau_\infty| y(\sigma(x)t;M(\sigma(x)T))  \\
& + \tau_\infty |y(\sigma(x)t;M(\sigma(x)T))-y(\sigma_\infty t; M(\sigma(x)T)| \\
& + \tau_\infty |y(\sigma_\infty t; M(\sigma(x)T)-y(\sigma_\infty t;M(\sigma_\infty T)|.
\end{align*}
Due to periodicity it is sufficient to consider the above estimate for $t\in [0,T]$. Now we use that $\|y(\cdot;c)\|_\infty \leq (\frac{p+1}{2})^\frac{1}{p-1}$, $\|\dot y(\cdot;c)\|_\infty \leq \sqrt{\frac{p-1}{p+1}}$, $\sup_{\R^3} \sigma=\Sigma<\infty$, and employ Lemma~\ref{normalization}, and (B1), (B3)' to estimate further 
\begin{align*}
|U(x,t)-U_\infty(x,t)| \leq & |\tau(x)-\tau_\infty| (\frac{p+1}{2})^\frac{1}{p-1} + \tau_\infty \|\dot y(\cdot, M(\sigma(x)T)\|_\infty (\sigma(x)-\sigma_\infty)T \\
& + \tau_\infty C_T\sqrt{|M(\sigma(x)T)-M(\sigma_\infty T)|} \\
\leq & C_1 |\tau(x)-\tau_\infty| + C_2 |\sigma(x)-\sigma_\infty| + \tau_\infty C_T\sqrt{T}\sqrt{\sigma(x)-\sigma_\infty}\sqrt{\sup_{[\frac{2\pi}{\sqrt{p-1}}, \Sigma T]} M'}  \\
\leq & C e^{-\frac{\delta}{2}|x|}.
\end{align*}
This finishes the proof of the exponential localization of $U(x,t)$ w.r.t. $U_\infty$. The proof that $U$ is a weak solution of \eqref{rogue} follows the line of the proof of Theorem~\ref{main_breather}.  
\qed

\section{Proof of the main result for rogue waves} \label{sec:rogue}

Now we work under the assumptions (C1)--(C3) and (R) of Theorem~\ref{main_rogue} and look for rogue wave solutions $U$ of \eqref{rogue} of the form $U(x,t) := \psi(g(x),t+a(g(x)))\frac{\nabla g(x)}{|\nabla g(x)|}$. As before we see that $U$ is a gradient field. 
\medskip

\noindent
{\bf Proof of Theorem~\ref{main_rogue}:} Let $y_0:\R\to (0,\infty)$ denote the positive homoclinic solution of \eqref{ode_y_rogue} with $y_0(0)=\bigl(\frac{p+1}{2}\bigr)^{\frac{1}{p-1}}$, $\dot y_0(0)=0$ so that $A(y_0(t),\dot y_0(t))=0$ (any other initial value except $(0,0)$ on the homoclinic orbit as well as the negative homoclinic orbit would also work). Asymptotically, we have $|y_0(t)|=e^{-|t|}(1+o(1))$ as $|t|\to \infty$ and hence $|y_0(t)|\leq C_0e^{-|t|}$ for all $t\in \R$ and some $C_0>0$. Then
\begin{equation} \label{def_psi_rogue}
\psi(\zeta,t) := \tilde\tau(\zeta) y_0(\tilde\sigma(\zeta)t)
\end{equation} 
solves \eqref{ode_psi_rogue} provided $\tilde\sigma, \tilde\tau$ are defined as in \eqref{def_psi_plus}. Therefore $U_0(x,t)=\psi(g(x),t)\frac{\nabla g(x)}{|\nabla g(x)|}$ provides a solution of \eqref{rogue}. Moreover, since $0\leq\tilde\tau(g(x))\leq Ce^{-\delta|x|}$ and $\inf_{\Rg(g)}\tilde\sigma=\tilde\sigma_\ast>0$ from assumption (R) we see that $U_0$ is space-time exponentially localized w.r.t. $U_\infty=0$. This establishes the existence of one particular rogue wave. Next we set
\begin{equation} \label{phase_shift_rogue}
U(x,t) := U_0(x,t+a(g(x)))=\psi(g(x),t+a(g(x)))\frac{\nabla g(x)}{|\nabla g(x)|}
\end{equation}
for an arbitrary continuous function $a:\Rg(g)\to \R$ with $|a(g(x))|\leq \tilde C(1+|x|)$ and some positive constant $\tilde C>0$. Repeating the details from the proof of Theorem~\ref{main_breather} one finds that $U$ is a weak solution of \eqref{rogue} on $\R^3\times\R$. Note that the fact that from a gradient-type solution $U_0(x,t)$ of \eqref{rogue} we can generate other gradient-type solutions $U$ of \eqref{rogue} by setting $U(x,t)=U_0(x,t+a(g(x))$ has already been exploited for \eqref{breather}$_\pm$ and it remains valid in the context of \eqref{rogue}. 

\medskip

Next we check that $U$ is also space-time exponentially localized. Using again assumption (R) and the bound $|y_0(t)|\leq C_0e^{-|t|}$ we find the estimate
\begin{align*}
|U(x,t)| & \leq \tilde\tau(g(x)) C_0 e^{-|\tilde\sigma(g(x))(t+a(g(x)))|} \\
& \leq  CC_0 e^{-\delta|x|-\tilde\sigma_\ast|t+a(g(x))|}.
\end{align*}
By the estimates
$$
\left\{\begin{array}{rll}
\tilde\sigma_\ast|t+a(g(x))| & \geq \frac{\tilde\sigma_\ast}{2}|t| & \mbox{ if } |t|\geq 2|a(g(x))|, \vspace{\jot}\\
\delta |x| & \geq \frac{\delta}{2}|x|+\frac{\delta}{4\tilde C}|t|-\frac{\delta}{2}& \mbox{ if } |t| \leq 2|a(g(x))|
\end{array}\right.
$$
we obtain
$$
|U(x,t)| \leq CC_0e^\frac{\delta}{2} e^{-\tilde\delta(|x|+|t|)}
$$
where $\tilde\delta=\min\bigl\{\frac{\delta}{2},\frac{\tilde\sigma_\ast}{2},\frac{\delta}{4\tilde C}\bigr\}$ which proves the claim.

\medskip

Finally, we explain that $U$ can be approximated by $T$-periodic solutions as $T\to\infty$. Recall from the properties of \eqref{ode_y_rogue} as explained in Lemma~\ref{phase_plane_rogue} that ``small'' periodic orbits of \eqref{ode_y_rogue} around the equilibrium $(1,0)$ inside the positive homoclinic are associated to negative values of the first integral $A(\xi,\eta)$ from \eqref{first_integral_rogue}. Recall also that the function $L:(\frac{1-p}{1+p},0)\to (\frac{2\pi}{\sqrt{p-1}},\infty)$, which assigns to each value $c$ of the first integral the minimal period of the orbit of \eqref{ode_y_rogue} with  $A(y,\dot y)=c$, strictly increases from $\frac{2\pi}{\sqrt{p-1}}$ at $c=\frac{1-p}{1+p}$ for the equilibrium $(1,0)$ to $+\infty$ at $c=0$ for the positive homoclinic, cf. Figure~\ref{fig:phase_plane_rogue} and Lemma~\ref{phase_plane_rogue}. By choosing\footnote{An example of such a function is given by $\gamma(c)=(a^{-1}(c),0)$ with $a(\xi)=-\xi^2+\frac{2}{p+1}\xi^{p+1}$ for $\xi\in [1,(\frac{p+1}{2})^{\frac{1}{p-1}}]$.} a continuous curve $\gamma:[\frac{1-p}{1+p},0]\to [0,\infty)\times\R$ of initial values with $A(\gamma(c))=c$ and such that $\gamma(0)=\bigl((\frac{p+1}{2})^{\frac{1}{p-1}},0\bigr)$ we can normalize the positive periodic orbits of \eqref{ode_y_rogue} by the requirement $(y(0;c),\dot y(0;c))=\gamma(c)$ for $c\in [\frac{1-p}{1+p},0]$. In particular $y(\cdot;0)=y_0(\cdot)$ where $y_0$ is the positive homoclinic from the beginning of the proof. Using that $M=L^{-1}$ from Lemma~\ref{phase_plane_rogue} has a continuous extension $M: [\frac{2\pi}{\sqrt{p-1}},\infty)\to [\frac{1-p}{1+p},0)$ we may define a $T$-periodic weak breather solution of \eqref{rogue} by 
$$
U_T(x,t) := \psi_T(g(x),t)\frac{\nabla g(x)}{|\nabla g(x)|} \mbox{ with } \psi_T(\zeta,t)=\tilde \tau(\zeta)y\bigl(\tilde\sigma(\zeta)t;M(\tilde\sigma(\zeta)T)\bigr)
$$
with $\tilde\sigma, \tilde\tau$ as in \eqref{def_psi_plus}. Note that the above construction requires $T>\frac{2\pi}{\sqrt{p-1}\tilde\sigma_\ast}$. Therefore, as $T\to \infty$ we get that $M(\sigma(x)T)\to 0$ uniformly in $x\in\R^3$ and thus $U_T(x,t)\to U_0(x,t)$ locally uniformly in $(x,t)\in\R^4$ as $T\to\infty$. Adding the function $a(g(x))$ in the time-variable as in \eqref{phase_shift_rogue} both to $U_T$ and $U_0$ yields the approximation claim. Note that if $\sigma\in L^\infty(\R^3)$ then for each admissible finite $T>0$ the breather solution $U_T$ is not localized in space since at every space point $x\in\R^3$ it oscillates according to a periodic orbit of \eqref{ode_y_rogue} which has a positive distance from the homoclinic as well as from the equilibrium $(1,0)$. \qed

\begin{remark} We could have given other approximations $U_T$ of $U_0$ by utilizing the ``large'' periodic orbits outside the two homoclinics, cf. Figure~\ref{fig:phase_plane_rogue}. However, it is not clear to us if and in which sense these periodic breathers converge to a rogue wave. 
\end{remark}

\section*{Appendix}

\noindent
{\bf Proof of Lemma~\ref{phase_plane_rogue}.} Let $k(y):= 1-y^2+\frac{2}{p+1}(y^{p+1}-1)$. Then $k'(y)=2(y^p-y)$ and $k''(y)=2(py^{p-1}-1)$. If we set $k_- := k|_{[0,1]}$ and $k_+ := k|_{[1,(\frac{p+1}{2})^{1/(p-1)}]}$ then $k_-$ is strictly decreasing from $\frac{p-1}{p+1}$ to $0$ on the interval $[0,1]$ and $k_+$ is strictly increasing from $0$ to $\frac{p-1}{p+1}$ on the interval $[1,(\frac{p+1}{2})^{1/(p-1)}]$. For $c\in [\frac{1-p}{p+1},0]$ we can therefore define $N_\pm(c)=k_\pm^{-1}(\tilde c)$ with the shorthand $\tilde c=c+\frac{p-1}{p+1}$. Then $N_\pm(c)$ with $N_-(c)\leq 1 \leq N_+(c)$ denote the two extreme points with speed $\dot y=0$ on the positive orbit whose first integral has the value $c$. Recall that $L(c)$ is the minimal time-period of an orbit parameterized by $c\in (\frac{1-p}{1+p},0)$. Since such an orbit has the symmetry that for $\xi>0, \eta\in\R$ we have that $A(\xi,\eta)=c$ if and only if $A(\xi,-\eta)=c$, we find the following expression 
\begin{equation}
\label{beginning}
L(c) = \int_0^{L(c)} dt = 2 \int_{N_-(c)}^{N_+(c)} \frac{dy}{\sqrt{c+y^2-\frac{2}{p+1}y^{p+1}}} = 2\int_{N_-(c)}^{N_+(c)} \frac{dy}{\sqrt{\tilde c-k(y)}}
\end{equation}
where we have used the substitution $\dot y(t)\,dt = dy$ and $A(y,\dot y)\equiv c$. 

\noindent
\emph{Step 1 -- differentiability of $L(c)$ and expression for $L'(c)$:} We claim that for $c\in (\frac{1-p}{1+p},0)$ the function $c\mapsto L(c)$ is continuously differentiable and that 
\begin{equation} \label{L_prime}
L'(c) = \frac{1}{\tilde c} \int_{N_-(c)}^{N_+(c)} \frac{k'(y)^2-2k(y)k''(y)}{k'(y)^2} \frac{dy}{\sqrt{\tilde c-k(y)}}.
\end{equation}
The proof is done by splitting the integral $L(c)=\int_{N_-(c)}^{N+(c)}\ldots\,dy= \int_{N_-(c)}^1\ldots \,dy+\int_1^{N_+(c)} \ldots \,dy$ and substituting $y=k_\mp^{-1}(\tilde c z)$ in the two integrals, respectively. This results in 
\begin{equation} \label{split_the_integral}
L(c) = -2\int_0^1 \frac{\sqrt{\tilde c} (k_-^{-1})'(\tilde c z)}{\sqrt{1-z}}\,dz + 2\int_0^1 \frac{\sqrt{\tilde c} (k_+^{-1})'(\tilde c z)}{\sqrt{1-z}}\,dz 
\end{equation}
Now we can consider differentiation w.r.t. $c$. We only show the result for the first of the two integrals. Using the formulas $(k_-^{-1})'= \frac{1}{k'(k_-^{-1})}$ and $(k_-^{-1})''=-\frac{k''(k_-^{-1})}{k'(k_-^{-1})^3}$ 
we find 
\begin{equation} \label{cara}
\begin{split} 
\frac{\partial}{\partial\tilde c} \frac{2\sqrt{\tilde c} (k_-^{-1})'(\tilde c z)}{\sqrt{1-z}} =& 
\frac{1}{\sqrt{\tilde c}} \left( (k_-^{-1})'(\tilde c z) + 2 \tilde c z (k_-^{-1})''(\tilde c z)\right)\frac{1}{\sqrt{1-z}}\\
=& \frac{1}{\sqrt{\tilde c}} \left(\frac{(k')^2-2kk''}{(k')^3}\right)(k_-^{-1}(\tilde cz))\frac{1}{\sqrt{1-z}}. 
\end{split}
\end{equation}
Next we see that $\left((k')^2-2kk''\right)'(y) = -2k(y)k'''(y) = -4p(p-1)y^{p-2} k(y)$ so that using $k(1)=0=k'(1)$ this implies 
\begin{equation} \label{first_note}
\left((k')^2-2kk''\right)(y)=-4p(p-1) \int_1^y t^{p-2}k(t)\,dt. 
\end{equation}
By a straightforward computation we see that $\frac{1}{k'(y)^3} \left(\int_1^y t^{p-2}k(t)\,dt\right)$ and hence $\frac{((k')^2-2kk'')(y)}{k'(y)^3}$ have limits as $y\to 1$. Therefore they are continuous functions on the entire interval $[N_-(c),N_+(c)]$. In particular, for $\tilde c$ in compact subintervals $J\subset(\frac{1-p}{1+p},0)$ there exists $C_J>0$ such that 
$$
\left|\frac{\partial}{\partial\tilde c} \frac{2\sqrt{\tilde c} (k_-^{-1})'(\tilde c z)}{\sqrt{1-z}}\right|\leq \frac{C_J}{\sqrt{1-z}} \quad \mbox{ for } z\in (0,1).
$$
This upper bound shows continuous differentiability by a dominated convergence argument and that for $c\in (\frac{1-p}{1+p},0)$
$$
\frac{d}{dc} \left(-2\int_0^1 \frac{\sqrt{\tilde c} (k_-^{-1})'(\tilde c z)}{\sqrt{1-z}}\,dz\right) = 
\frac{-1}{\sqrt{\tilde c}}\int_0^1 \left( (k_-^{-1})'(\tilde c z) + 2 \tilde c z (k_-^{-1})''(\tilde c z)\right)\,\frac{dz}{\sqrt{1-z}}.
$$
By \eqref{cara} and by reverting the substitution $z=\frac{1}{\tilde c}k(y)$ we get 
$$
\frac{d}{dc} \left(-2\int_0^1 \frac{\sqrt{\tilde c} (k_-^{-1})'(\tilde c z)}{\sqrt{1-z}}\,dz\right) = \frac{1}{\tilde c} \int_{N_-(c)}^1 \frac{k'(y)^2 - 2 k(y)k''(y)}{k'(y)^2}\, \frac{dy}{\sqrt{\tilde c-k(y)}}.
$$
Together with an analogous computation for the second integral in \eqref{split_the_integral} we obtain the claim of Step 1.

\medskip

\noindent
\emph{Step 2 -- alternative expression for $L'(c)$:} Next we claim that 
\begin{equation} \label{L_prime_alt}
\tilde c L'(c) = 8p(p-1) \int_{N_-(c)}^{N_+(c)} \frac{y^{p-2}}{k'(y)^4} \Phi(y) \sqrt{\tilde c-k(y)}\,dy
\end{equation}
where $\Phi(y):= \left[3y^{2-p}k''(y)\left(\int_1^y t^{p-2}k(t)\,dt\right)-k(y)k'(y)\right]$. Eventually, the formula will follow from an integration by parts. If we insert \eqref{first_note} in \eqref{L_prime} we obtain
\begin{align*}
\tilde c L'(c) = & -4p(p-1)\int_{N_-(c)}^{N_+(c)} \frac{1}{k'(y)^2} \left(\int_1^y t^{p-2}k(t)\,dt\right) \frac{dy}{\sqrt{\tilde c- k(y)}} \\
= & 8p(p-1) \int_{N_-(c)}^{N_+(c)} \frac{1}{k'(y)^3} \left(\int_1^y t^{p-2}k(t)\,dt\right) \frac{d}{dy}\sqrt{\tilde c- k(y)}\,dy\\
= & 8p(p-1) \frac{1}{k'(y)^3} \left(\int_1^y t^{p-2}k(t)\,dt\right) \sqrt{\tilde c- k(y)}\Big|_{N_-(c)}^{N_+(c)} \\
& - 8p(p-1) \int_{N_-(c)}^{N_+(c)} \frac{d}{dy} \left[\frac{1}{k'(y)^3}\left(\int_1^y t^{p-2} k(t)\,dt\right) \right] \sqrt{\tilde c- k(y)}\,dy.
\end{align*}
Recall that \eqref{first_note} implies that $\frac{1}{k'(y)^3} \left(\int_1^y t^{p-2}k(t)\,dt\right)$ is convergent at $y=1$ and hence continuous and bounded on the entire interval $[N_-(c),N_+(c)]$. Together with the fact that $\tilde c-k(y)$ vanishes for $y=N_\pm(c)$ this allows us to further compute  
$$
\tilde c L'(c) = 8p(p-1) \int_{N_-(c)}^{N_+(c)} \frac{y^{p-2}}{k'(y)^4}\left[3y^{2-p}k''(y)\left(\int_1^y t^{p-2}k(t)\,dt\right)-k(y)k'(y)\right] \sqrt{\tilde c-k(y)}\,dy.
$$
This proves the claim of Step 2. Note that the integration by parts is justified since in the last formula the function $\Phi(y)$ is $O(y-1)^4$ near $y=1$.

\medskip

\noindent
\emph{Step 3 -- monotonicity of $L(c)$ for $p\geq 2$:} We check the sign of $\Phi(y)$ in \eqref{L_prime_alt} by first computing its derivative
\begin{align*}
\Phi'(y) =& -3(p-2)y^{1-p} k''(y) \left(\int_1^y t^{p-2}k(t)\,dt\right) + 3y^{2-p} k'''(y)\left(\int_1^y t^{p-2}k(t)\,dt\right)\\
& + \underbrace{3k''(y)k(y)-k'(y)^2-k(y)k''(y)}_{=(2kk''-(k')^2)(y)}.
\end{align*}
Using \eqref{first_note} and the explicit form of $k$ and its derivatives we find 
\begin{equation} \label{final_formula}
\Phi'(y) = \underbrace{\left(6(p-2)y^{1-p}+2p(2p+1)\right)}_{>0 \text{ for } p\geq 2}\left(\int_1^y t^{p-2} k(t)\,dt\right).
\end{equation}
Hence $\Phi'(y)>0$ on $(1,\infty)$ and $\Phi'(y)<0$ on $(0,1)$. Since $\Phi(1)=0$ we get the desired result $\Phi>0$ on $(0,\infty)\setminus\{1\}$. 

\medskip

\noindent
\emph{Step 4 -- monotonicity of $L(c)$ for $1<p<2$:} Here we deduce the sign of $\Phi(y)$ in a slightly different way. Using the final formula \eqref{final_formula} from Step 3 we compute 
\begin{align*}
(y^{p-2}\Phi(y))' = & y^{p-2}\Phi'(y) +(p-2)y^{p-3}\Phi(y) \\
=& y^{p-2}\left(6(p-2)y^{1-p}+2p(2p+1)\right)\left(\int_1^y t^{p-2} k(t)\,dt\right) \\
& + (p-2)y^{p-3}\left[3y^{2-p}k''(y)\left(\int_1^y t^{p-2}k(t)\,dt\right)-k(y)k'(y)\right] \\
=& y^{p-2} 10p(p-1) \left(\int_1^y t^{p-2} y(t)\,dt\right) + (2-p)y^{p-3} k(y)k'(y).
\end{align*}
Since $1<p<2$ we see that both summands are negative on $(0,1)$ and positive on $(1,\infty)$. Using $\Phi(1)=0$ we deduce $y^{p-2}\Phi(y)>0$ on $(0,\infty)\setminus\{1\}$ as in Step 3.

\medskip

\noindent
\emph{Step 5 -- limit behaviour of $L(c)$:} Recall from \eqref{beginning} that
$$
L(c) = \int_0^{L(c)} dt = 2 \int_{N_-(c)}^{N_+(c)} \frac{dy}{\sqrt{c+y^2-\frac{2}{p+1}y^{p+1}}}.
$$
Using that $N_-(c)\to 0$ and $N_+(c)\to \left(\frac{p+1}{2}\right)^\frac{1}{p-1}$ as $c\to 0-$ together with Fatou's Lemma leads to  
\begin{equation} \label{period}
\liminf_{c\to 0-} L(c) \geq 2 \int_0^{(\frac{p+1}{2})^\frac{1}{p-1}} \frac{dy}{y\sqrt{1-\frac{2}{p+1}y^{p-1}}}=\infty.
\end{equation}
In order to complete the proof of Lemma~\ref{phase_plane_rogue}(i) it remains to expand $L(c)$ as $c\to \frac{1-p}{1+p}$. The double zero of $y\mapsto k(y)$ at $y=1$ shows that $k(y)=(p-1)(y-1)^2+ O((y-1)^3)$ as $y\to 1$ and hence 
\begin{equation} \label{hilft1}
\frac{\sqrt{k(y)}}{k'(y)} \to \pm \frac{1}{2\sqrt{p-1}} \mbox{ as } y \to 1\pm
\end{equation}
Applying l'Hospital's rule three times we also find 
\begin{equation} \label{hilft2}
\frac{\Phi(y)}{k'(y)^4} \to \frac{p+3}{48(p-1)^2} \mbox{ as } y\to 1.
\end{equation}
According to \eqref{split_the_integral}, using $(k_\pm^{-1})'=\frac{1}{k'_\pm(k_\pm^{-1})}$  and \eqref{hilft1} we get
\begin{align*}
L(c) & =  -2 \int_0^1 \frac{\sqrt{\tilde c z}}{k'(k_-^{-1}(\tilde cz))} \frac{dz}{\sqrt{z(1-z)}}+2 \int_0^1 \frac{\sqrt{\tilde c z}}{k'(k_+^{-1}(\tilde cz))} \frac{dz}{\sqrt{z(1-z)}} \\
 & =  -2 \int_0^1 \frac{\sqrt{k}}{k'}(k_-^{-1}(\tilde cz)) \frac{dz}{\sqrt{z(1-z)}}+2 \int_0^1 \frac{\sqrt{k}}{k'}(k_+^{-1}(\tilde cz)) \frac{dz}{\sqrt{z(1-z)}} \\
& \stackrel{c\to \frac{1-p}{1+p}}{\longrightarrow}  \frac{2}{\sqrt{p-1}} \int_0^1 \frac{dz}{\sqrt{z(1-z)}} = \frac{2\pi}{\sqrt{p-1}}.  
\end{align*}
Next we split the integral in \eqref{L_prime_alt} into $\int_{N_-(c)}^1\ldots+\int_1^{N_+(c)}\ldots$ and substitute $y=k_\pm^{-1}(\tilde c z)$, respectively. This implies 
\begin{align*}
L'(c)  =& -8p(p-1) \int_0^1 \left[y^{p-2} \frac{\Phi(y)}{k'(y)^4}\right]_{y=k_-^{-1}(\tilde cz)} \frac{\sqrt{\tilde cz}}{k'(k_-^{-1}(\tilde cz))} \sqrt{\frac{1-z}{z}}\,dz \\
& +8p(p-1) \int_0^1 \left[y^{p-2} \frac{\Phi(y)}{k'(y)^4}\right]_{y=k_+^{-1}(\tilde cz)} \frac{\sqrt{\tilde cz}}{k'(k_+^{-1}(\tilde cz))} \sqrt{\frac{1-z}{z}}\,dz.
\end{align*}
If we note that $\frac{\sqrt{\tilde cz}}{k'(k_\pm^{-1}(\tilde cz))}=\frac{\sqrt{k}}{k'}(k_\pm(\tilde cz))$ and use the relations \eqref{hilft1}, \eqref{hilft2} then we obtain 
$$
L'(c) \stackrel{c\to \frac{1-p}{1+p}}{\longrightarrow} \frac{p(p+3)}{6(p-1)^\frac{3}{2}} \int_0^1 \sqrt{\frac{1-z}{z}}\,dz = \frac{\pi p(p+3)}{12(p-1)^\frac{3}{2}}
$$
as claimed. 

\medskip

\noindent
\emph{Step 6 -- limit behaviour of $M(c)$ for $c\to \frac{2\pi}{\sqrt{p-1}}+$:} Step 5 shows that  $L(\frac{1-p}{1+p}+)=\frac{2\pi}{\sqrt{p-1}}$ and  $L'(\frac{1-p}{1+p}+)=\beta:= \frac{\pi p(p+3)}{12(p-1)^{3/2}}$. Therefore the inverse $M=L^{-1}$ has the property $M(\frac{2\pi}{\sqrt{p-1}}+)=\frac{1-p}{1+p}$ and $M'(\frac{2\pi}{\sqrt{p-1}}+)=1/\beta$. This completes the proof of Lemma~\ref{phase_plane_rogue}. \qed

\section*{Acknowledgments} Funded by the Deutsche Forschungsgemeinschaft (DFG, German Research Foundation) – Project-ID 258734477 – SFB 1173.
\bibliographystyle{plain}	
\bibliography{bibliography}

\end{document}